\documentclass[a4paper,oneside,reqno,12pt]{amsart}

\usepackage[margin=30mm]{geometry}

\usepackage{amsmath, amssymb, amsthm, mathtools}
\usepackage{mathrsfs}
\usepackage{tabularx}
\usepackage{url}

\usepackage[T1]{fontenc}
\usepackage{lmodern}
\usepackage[final]{microtype}

\usepackage[dvipsnames]{xcolor}
\usepackage[pdfencoding=auto,colorlinks]{hyperref}
\hypersetup{
    colorlinks=true,
    linkcolor=BrickRed,
    citecolor=Green,
    filecolor=Mulberry,
    urlcolor=NavyBlue,
    menucolor=BrickRed,
    runcolor=Mulberry,
    pdftitle={Ball codes: A coding characterization of Hausdorff and packing dimensions},
    pdfauthor={Kenshi Miyabe},
    pdfsubject={28A78, 03D32, 68Q30},
    pdfkeywords={Hausdorff dimension, packing dimension,
Kolmogorov complexity, point-to-set principle,
effective fractal dimension},
}

\theoremstyle{plain}
\newtheorem{theorem}{Theorem}[section]
\newtheorem{lemma}[theorem]{Lemma}
\newtheorem{corollary}[theorem]{Corollary}
\newtheorem{proposition}[theorem]{Proposition}

\theoremstyle{definition}
\newtheorem{definition}[theorem]{Definition}

\theoremstyle{remark}
\newtheorem{remark}[theorem]{Remark}

\DeclareMathOperator{\Dim}{Dim}
\DeclareMathOperator{\dom}{dom}
\DeclareMathOperator{\diam}{diam}

\begin{document}

\title[Ball Codes and Dimensions]{Ball codes: A coding characterization of Hausdorff and packing dimensions}

\author{Kenshi Miyabe}

\address[K.~Miyabe]{Meiji University, Japan}
\email{research@kenshi.miyabe.name}

\date{}

\subjclass[2020]{28A78, 03D32, 68Q30}
\keywords{Hausdorff dimension, packing dimension,
Kolmogorov complexity, point-to-set principle,
effective fractal dimension}


\begin{abstract}
We prove a purely classical, coding-theoretic characterization of Hausdorff and
packing dimensions in \(\mathbb R^n\).  A \emph{ball code} assigns names to closed
balls of \(\mathbb R^n\): it is a partial map from a prefix-free set of finite
binary strings to balls.
For every nonempty
\(E\subseteq\mathbb R^n\), the Hausdorff dimension of \(E\) is the minimum, over
all ball codes, of the supremum over \(x\in E\) of the lower asymptotic rate at
which \(x\) can be described by names of balls containing it; the packing
dimension is obtained in the same way from the upper rates.
The Hausdorff characterization
is a Euclidean counterpart of Ryabko's coding theorem for
combinatorial sources.
We obtain the packing characterization from the standard characterization of
packing dimension by upper modified box-counting dimension, without encoding
\(\mathbb R^n\) into a sequence space.
We then derive the point-to-set principle of J.~Lutz and N.~Lutz by
replacing a minimizing ball code with a rational one and storing the resulting
countable codebook in an oracle.
\end{abstract}


\maketitle


\section{Introduction}
\label{sec:introduction}

Hausdorff dimension is defined in terms of covers of a set
\(E\subseteq\mathbb R^n\) whose total \(s\)-cost becomes arbitrarily small as
their diameters tend to zero.  Nothing in the definition refers to the
individual points of \(E\).  Information theory
suggests a different measure of size: fix a scheme for naming small regions of
\(\mathbb R^n\) by finite binary strings, and ask how many bits are needed to name
a region that contains a given point \(x\) and has diameter at most \(2^{-r}\).
The asymptotic number of bits per unit of precision defines a description rate
for \(x\).  We show that both Hausdorff and packing dimensions are given by
such rates on \(\mathbb R^n\).  The point-to-set principle of J.~Lutz and
N.~Lutz~\cite{LutzLutz2018} suggests such a correspondence.  That principle uses
an oracle and an optimal prefix-free machine.  In
Section~\ref{sec:point-to-set-principles}, we derive it from our main result (Theorem~\ref{thm:intro-main}).

We use a \emph{ball code},
which is a partial map \(\varphi\) from finite binary strings to
closed balls of \(\mathbb R^n\) whose domain is prefix-free.  A string
\(p\in\dom\varphi\) is a \emph{\(\varphi\)-name} of the ball
\(\varphi(p)\); a ball may carry several names, or none.  For \(x\in\mathbb R^n\) and
\(r\in\omega\) put
\[
K_\varphi(x;r)
=
\min\bigl\{\,|p| : p\in\dom\varphi,\ x\in\varphi(p),\
\diam\varphi(p)\le 2^{-r}\,\bigr\},
\]
with \(K_\varphi(x;r)=\infty\) if there is no such ball.

Our main results characterize Hausdorff and packing dimensions
as follows.

\begin{theorem}[Theorems~\ref{thm:prefix-free-ball-code-hausdorff}
and~\ref{thm:packing-ball-code-dimension}]
\label{thm:intro-main}
For every nonempty \(E\subseteq\mathbb R^n\),
\begin{equation}
\label{eq:intro-classical}
\dim_H(E)
=
\min_{\varphi}
\sup_{x\in E}
\liminf_{r\to\infty}\frac{K_\varphi(x;r)}{r},
\qquad
\dim_P(E)
=
\min_{\varphi}
\sup_{x\in E}
\limsup_{r\to\infty}\frac{K_\varphi(x;r)}{r},
\end{equation}
where \(\varphi\) ranges over all prefix-free ball codes.  In both cases the
minimum is attained.
\end{theorem}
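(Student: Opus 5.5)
We prove the two identities separately. Each has a lower bound valid for every prefix-free ball code and an upper bound given by constructing one code; the construction will also show the minimum is attained.

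\emph{Hausdorff, lower bound.} Fix a prefix-free ball code \(\varphi\) and let \(s>\sup_{x\in E}\liminf_r K_\varphi(x;r)/r\). Choose \(s'\) with the supremum \(<s'<s\). Every \(x\in E\) then has infinitely many \(r\) with \(K_\varphi(x;r)\le s'r\). Hence for each \(R\), the family
\[
\mathcal B_R=\{\varphi(p): \diam\varphi(p)\le 2^{-r},\ |p|\le s'r \text{ for some } r\ge R\}
\]
covers \(E\) with sets of diameter at most \(2^{-R}\). A ball with name \(p\) has diameter \(\le 2^{-|p|/s'}\), so its \(s\)-cost is at most \(2^{-|p|s/s'}\le 2^{-|p|}\). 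By Kraft's inequality for the prefix-free set \(\dom\varphi\), the total cost is at most \(\sum_{|p|\ge s'R}2^{-|p|}\). This tail tends to \(0\) as \(R\to\infty\) (restrict to names of length \(\ge s'R\)). Therefore \(\mathcal H^s(E)=0\) and \(\dim_H E\le s\).

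\emph{Hausdorff, upper bound and attainment.} Let \(d=\dim_H E\). For each rational \(s_k\downarrow d\) (or \(s_k=d+1/k\)) and each \(m\), choose a countable cover of \(E\) by closed balls \(B_{k,m,i}\) with \(\sum_i(\diam B_{k,m,i})^{s_k}<2^{-m}\). Closed balls are allowed because Hausdorff measure computed with balls is comparable to the usual one, and we may enlarge sets to balls. Each ball gets weight \(w=2^{-k-m}\cdot(\diam B)^{s_k}\cdot 2^{m}/\text{(normalizer)}\); more precisely, we take weights \(2^{-2k-m}\cdot 2^m(\diam B)^{s_k}\), summed over \(m\ge k\) say. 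The total weight is then \(\le1\). Kraft's construction gives prefix-free names of length \(\lceil-\log w\rceil\approx s_k\log(1/\diam B)+O(k)\). Since every \(x\in E\) lies in such balls of arbitrarily small diameter (because \(\diam B\le 2^{-m/s_k}\)), we get \(\liminf_r K_\varphi(x;r)/r\le s_k\) for every \(k\), hence \(\le d\). Merging all \(k\) into one code via the additive \(2k\) term yields a single \(\varphi\) attaining \(d\). Degenerate balls of diameter \(0\) need only minor bookkeeping. The same computation shows the minimum is attained for any fixed \(E\).

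\emph{Packing.} We use \(\dim_P E=\inf\{\sup_i\overline{\dim}_B E_i : E\subseteq\bigcup_i E_i\}\) (the modified upper box dimension).

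For the upper bound, pick a cover with \(\overline{\dim}_B E_i<d+1/k\), where \(d=\dim_P E\), done for each \(k\). Code, for each \(i,k,r\), the \(\le 2^{(d+1/k)r}\) grid balls of diameter \(2^{-r}\) meeting \(E_i\), using weights \(2^{-2i-2k-2\log r}\cdot 2^{-(d+1/k)r}\). Every \(x\in E_i\) then has \(K_\varphi(x;r)\le(d+1/k)r+O(\log r)\) for all large \(r\), so the \(\limsup\) is \(\le d\). Using one \(k\)-family per point, merged, gives attainment.

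For the lower bound, given \(\varphi\) and \(s\) exceeding the \(\limsup\) supremum, set
\[
E_R=\{x\in E: K_\varphi(x;r)\le sr\ \text{for all } r\ge R\}.
\]
At scale \(r\ge R\), \(E_R\) is covered by at most \(2^{sr+1}\) balls of diameter \(\le2^{-r}\), one per name of length \(\le sr\). Hence \(\overline{\dim}_B E_R\le s\). Since \(E=\bigcup_R E_R\), we get \(\dim_P E\le s\).

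The main obstacle is the Hausdorff upper bound: making a single code work simultaneously for all \(s_k\) and all scales while keeping Kraft's sum bounded, and checking that the \(O(k)\) overhead is negligible relative to \(r\) along the relevant scales.
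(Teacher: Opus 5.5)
Your proposal is correct and follows the paper's proof in all four directions, with only cosmetic differences: you merge all exponents $s_k$ into a single Kraft request set instead of first building one code per exponent and joining them by the prefixes $1^j0$, and your pieces $E_R$ need no intersection with $\overline B(0,M)$, since at scale $R$ they are covered by finitely many balls and are therefore bounded, as the modified box-counting characterization requires. One step of the Hausdorff lower bound is misstated: $\mathcal B_R$ may contain names shorter than $s'R$ (a short name can denote a very small ball), so bounding its cost by the tail $\sum_{|p|\ge s'R}2^{-|p|}$ needs an extra argument that the long names alone still cover $E$; it is simpler to use $(\diam\varphi(p))^s\le 2^{-sr}\le 2^{-(s-s')R}\,2^{-|p|}$, which bounds the total cost by $2^{-(s-s')R}$, or just to note that the Kraft bound $\le 1$ already gives $\mathcal H^s(E)\le 1<\infty$, which suffices for $\dim_H(E)\le s$.
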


No machine, no oracle, and no notion of computability occur in
\eqref{eq:intro-classical}:
this is a statement of classical fractal geometry, and
it is proved by classical means.
The two formulas differ only in the use of \(\liminf\) and \(\limsup\), but
this difference leads to substantially different proofs.
A \(\liminf\) only requires short names at infinitely many precisions,
so covers at sparse scales can be assembled with negligible indexing cost.
A \(\limsup\) requires short names at all large precisions,
so the proof must use uniform box-counting control and then the countable
decomposition that turns upper box-counting dimension into packing dimension.

The paper makes two main contributions.
First, it separates the geometric and
counting arguments in the point-to-set principle from the subsequent encoding
of countable data into an oracle.
Second, it gives what is, to our knowledge,
the first classical coding characterization of packing dimension.  The proof is
based on upper modified box-counting dimension (see also Remark~\ref{rem:recovered}).

We remark here that the minimum in
\eqref{eq:intro-classical} cannot be replaced by evaluation at a single universal code: a code minimizing for every singleton would force both right-hand sides to vanish identically.
The code must therefore depend on the set \(E\).

\subsection*{Related work}

The Hausdorff half of Theorem~\ref{thm:intro-main} is a Euclidean, ball-based
counterpart of Ryabko's coding theorem for combinatorial
sources~\cite{Ryabko1986}: for a finite alphabet \(A\) and every
\(I\subseteq A^\omega\),
\[
\min_{\varphi}\ \sup_{x\in I}\ \liminf_{n\to\infty}\frac{v_\varphi(n,x)}{n}
\;=\;
\dim_H(I)\cdot\log_2|A|,
\]
where \(v_\varphi(n,x)\) is the number of code bits needed to recover
\(x\!\upharpoonright\! n\), and the factor \(\log_2|A|\) is the normalization
implicit in counting bits while measuring diameters in cylinders.
Here, a combinatorial source means an arbitrary set of infinite sequences
over a finite alphabet, without an underlying probability distribution.

In Ryabko's formulation, an infinite source sequence is encoded by a single
infinite binary sequence, and finite prefixes of the source are recovered from
finite prefixes of the code.
A ball code, by contrast, is finitary: a finite
string directly names a neighborhood of finite precision, and the names used at
different precisions are not required to be prefixes of one common infinite
name.  The two formulations impose different requirements on codes.
Removing coherence enlarges the class of admissible codes.
Consequently, the existence
of a minimizing code is more demanding in Ryabko's model, whereas the lower
bound for every admissible code is more demanding in ours.
The point-to-set principle requires this lower bound for codes that are not
monotone: the domain of an oracle prefix-free machine has no coherence between
scales.
The finitary formulation also allows computability to be
imposed directly on a partial map from finite strings to rational balls, which
is exactly what a prefix-free machine is.

Beyond this difference in format, two further points separate Theorem~\ref{thm:intro-main} from Ryabko's theorem.  The first
is the passage to \(\mathbb R^n\), where the canonical cell structure of
\(A^\omega\) --- cylinders, nested and pairwise disjoint at each level --- is
absent.  Balls overlap arbitrarily, a point lies in many balls of a given
diameter, and the scales at which a code names balls need not be organized into
levels.  The second difference concerns packing dimension, which is not treated
in Ryabko's theorem.  One might expect it to follow from the gale
characterizations of packing dimension on
Cantor space~\cite{AthreyaHitchcockLutzMayordomo2007} by the usual dualities,
transported to \(\mathbb R^n\) through binary expansions.  We do not proceed that
way.  The proof in Section~\ref{sec:packing-ball-codes} uses the standard equality
of packing dimension with upper modified box-counting dimension, and hence the
countable decompositions supplied by that equality.  Nothing is encoded into a
sequence space, and no gale appears.

We use balls rather than the dyadic cubes into which \(\mathbb R^n\) is usually
partitioned because effective dimension on \(\mathbb R^n\) is defined through
approximation of points by rationals at a given
precision~\cite{LutzMayordomo2008}.
Naming a rational ball is a closely related
coding operation, and this choice makes the passage to oracle machines in
Section~\ref{sec:point-to-set-principles} direct.

Finally, this work belongs to a line of research that gives classical proofs of
results first obtained using computability-theoretic methods.
Orponen~\cite{Orponen2021} gave
combinatorial proofs of two projection theorems
from the preliminary version of the work of N.~Lutz and
Stull~\cite{LutzStull2018,LutzStull2024},
which had originally been proved using effective dimension,
and generalized them to projections onto higher-dimensional subspaces.
Lutz, Qi, and Yu~\cite{LutzQiYu2024} conjecture that
their theorem on Hamel bases likewise
admits a classical proof.
Here we give a classical proof of the point-to-set principle itself, rather than
of one of its applications.

\begin{table}[t]
\centering
\begin{tabularx}{\textwidth}{c|X|X|X}
 & Test & Complexity & Gale \\
\hline
Classical
& Hausdorff and packing measures~\cite{Falconer2014}
& Coding rate on \(A^\omega\)~\cite{Ryabko1986};
  ball codes on \(\mathbb R^n\) (Theorem~\ref{thm:intro-main})
& Gales~\cite{Lutz2003};
  strong gales~\cite{AthreyaHitchcockLutzMayordomo2007} \\
Effective
& Partial randomness for Hausdorff dimension~\cite{Tadaki2002}
& Kolmogorov complexity~\cite{Mayordomo2002,%
AthreyaHitchcockLutzMayordomo2007,LutzMayordomo2008}
& Constructive gales~\cite{Lutz2003b,%
AthreyaHitchcockLutzMayordomo2007}
\end{tabularx}
\caption{Characterizations of Hausdorff dimension and, where available, of
packing dimension.  Ball codes supply the classical complexity entry on
\(\mathbb R^n\), for both dimensions at once; the point-to-set principle is its
effectivization.}
\label{tab:test-complexity-gale}
\end{table}

The three columns of Table~\ref{tab:test-complexity-gale} record the frameworks in
which dimension bounds are commonly expressed --- by tests, by description
length, and by gales --- together with their classical and effective versions
where available.
The classical test entry is the definition of Hausdorff dimension
itself; the classical gale entry is Lutz's gale
characterization~\cite[Theorem~3.6]{Lutz2003}, of which effective Hausdorff
dimension~\cite{Lutz2003b} is the effectivization, the packing analogues of both
being due to Athreya, Hitchcock, Lutz, and
Mayordomo~\cite{AthreyaHitchcockLutzMayordomo2007}.  These entries are recalled in more detail in
Section~\ref{subsec:existing-characterizations}.  The classical
complexity entry on \(\mathbb R^n\) is the one supplied here.

\subsection*{The point-to-set principle}

Granting Theorem~\ref{thm:intro-main}, the point-to-set principle
\begin{equation}
\label{eq:intro-psp}
\dim_H(E)
=
\min_{A\subseteq\omega}
\sup_{x\in E}
\liminf_{r\to\infty}\frac{K^A_r(x)}{r},
\qquad
\dim_P(E)
=
\min_{A\subseteq\omega}
\sup_{x\in E}
\limsup_{r\to\infty}\frac{K^A_r(x)}{r}
\end{equation}
--- where \(K^A_r(x)\) is the length of a shortest program that, given the oracle
\(A\), outputs a rational point within \(2^{-r}\) of \(x\) --- follows from two
steps.
Every prefix-free ball code may be replaced by
one whose values are rational closed balls, at the cost of one step of precision
(Lemma~\ref{lem:rationalize-prefix-free-ball-code}).
The resulting countable codebook can be encoded into an oracle.  An optimal
oracle prefix-free machine can then simulate the code with an additive constant
overhead
(Lemma~\ref{lem:oracle-simulates-rational-ball-code}); conversely, for every
oracle \(A\), the partial map \(U^A\) is a prefix-free ball code.
Minimizing over oracles thus corresponds to minimizing over ball codes, and
\eqref{eq:intro-psp}
follows (Theorem~\ref{thm:point-to-set-coding}).
The formulation of
\eqref{eq:intro-psp} in terms of rational point complexity, which is the one due
to Lutz and Lutz, is then obtained by comparing ball complexity with point
complexity (Proposition~\ref{prop:ball-point-effective-dimensions}) and taking
the resulting asymptotic rates
(Corollary~\ref{cor:ball-point-effective-dimensions}), yielding
Corollary~\ref{cor:rational-point-point-to-set}.

\begin{remark}\label{rem:recovered}
Conversely, granted the point-to-set principle, the rational-ball case of
Theorem~\ref{thm:intro-main} can be recovered by the same two lemmas:
Lemma~\ref{lem:oracle-simulates-rational-ball-code} encodes rational ball codes
into oracles, and Lemma~\ref{lem:rationalize-prefix-free-ball-code} reduces
arbitrary codes to rational ones.  This argument recovers the statement of the
theorem, but not the proof given here.
\end{remark}

The geometric argument in the packing case is closely related to the known
proof of the (extended) point-to-set principle,
which also uses the modified box-counting characterization of packing
dimension~\cite{LutzLutzMayordomo2023}.
The difference is that we first prove
a classical coding theorem and only afterward encode its countable data into an
oracle.

The attainment of the minimum in our classical theorem also yields an oracle
attaining the minimum in the point-to-set principle.  Here, the oracle is
minimizing only in the formula for the dimension of \(E\): it attains the
infimum of the supremum over \(x\in E\).  This should not be confused with the
notion of an \emph{optimal oracle} introduced
by Stull~\cite{Stull2022}, which imposes a substantially stronger requirement and,
when it is met, implies the conclusion of Marstrand's projection theorem for
\(E\).  Whether optimal oracles in that sense admit a classical characterization in
the ball-code model we leave open.

The point-to-set principle has been used in several problems in classical
fractal geometry.  It has been used to extend the fractal intersection formulas from Borel sets to
arbitrary sets~\cite{Lutz2021}, to extend Marstrand's projection theorem to
arbitrary sets whose Hausdorff and packing dimensions agree~\cite{LutzStull2024},
and to construct Hamel bases of every prescribed dimension~\cite{LutzQiYu2024}.
This approach does not replace the genuinely effective tools used in those
proofs---relativized complexity estimates, symmetry of information, and
problem-specific oracle constructions---but rearranges these ingredients to
prove \eqref{eq:intro-psp}.

\subsection*{Organization}

Section~\ref{sec:preliminaries} recalls the classical definitions of Hausdorff,
packing, and box-counting dimension, and the background on algorithmic fractal
dimensions.  Section~\ref{sec:hausdorff-ball-codes} introduces ball codes and
proves the Hausdorff half of Theorem~\ref{thm:intro-main} for prefix-free ball
codes, followed by a remark on plain ball codes.
Section~\ref{sec:packing-ball-codes} proves
the packing half and records the analogous observation for plain ball codes.
Section~\ref{sec:point-to-set-principles} carries out the
effectivization and derives the point-to-set principles, first in coding form and
then in the usual rational-point form.

\section{Preliminaries}
\label{sec:preliminaries}

Throughout the paper, a positive integer \(n\ge1\) is fixed as the dimension
of the underlying Euclidean space $\mathbb{R}^n$.
We use the adjective
\emph{classical} for statements whose formulation involves no notion of
computability, irrespective of when they were proved.

\subsection{Hausdorff dimension and packing dimension}

For the classical theory of Hausdorff and packing dimensions, our main
reference is Falconer~\cite{Falconer2014}.
For a set \(U\subseteq\mathbb R^n\), write
\[
\diam U=\sup\{|x-y|:x,y\in U\}
\]
for its diameter when \(U\ne\emptyset\), and set \(\diam\emptyset=0\).

We begin by recalling the definition of classical Hausdorff dimension.

\begin{definition}[{See \cite[Subsections~3.1--3.2]{Falconer2014}}]
\label{def:hausdorff-dimension}
Let \(E \subseteq \mathbb R^n\) and let \(s \ge 0\).
For any \(\delta > 0\), we define
\[
\mathcal H^s_\delta(E)
=
\inf
\left\{
\sum_{i\in I} (\diam U_i)^s
:
E \subseteq \bigcup_{i\in I} U_i,
\ 0<\diam(U_i) \le \delta
\text{ for every } i\in I
\right\},
\]
where the infimum is taken over all finite or countable, possibly empty, index
sets \(I\) and all families \((U_i)_{i\in I}\) of subsets of \(\mathbb{R}^n\).
The \(s\)-dimensional Hausdorff measure of \(E\) is
\[
\mathcal H^s(E)
=
\lim_{\delta\to0}
\mathcal H^s_\delta(E).
\]
The Hausdorff dimension of \(E\) is
\[
\dim_H(E)
=
\inf\{s\ge0:\mathcal H^s(E)=0\}.
\]
\end{definition}

The requirement \(\diam(U_i)>0\) is harmless, since a set of diameter zero can
be replaced by a ball of arbitrarily small positive diameter.

Notice that \(\mathcal H^s_\delta(\emptyset)=\mathcal{H}^s(\emptyset)=0\)
and \(\dim_H(\emptyset)=0.\)

We also recall the classical definition of packing dimension.

\begin{definition}[{See \cite[Subsection~3.5]{Falconer2014}}]
\label{def:packing-dimension}
Let \(E\subseteq\mathbb R^n\), let \(s\ge0\), and let \(\delta>0\).
A \(\delta\)-packing of \(E\) is a finite or countable, possibly empty, family \((B_i)_i\) of pairwise
disjoint closed balls of positive radius such that each \(B_i\) has center in \(E\) and
radius at most \(\delta\).  Define
\[
\mathcal P^s_\delta(E)
=
\sup
\left\{
\sum_i (\diam B_i)^s:
(B_i)_i \text{ is a }\delta\text{-packing of }E
\right\}.
\]
Then put
\[
\mathcal P^s_0(E)
=
\lim_{\delta\to0}\mathcal P^s_\delta(E).
\]
The \(s\)-dimensional packing measure of \(E\) is
\[
\mathcal P^s(E)
=
\inf
\left\{
\sum_{j=1}^\infty \mathcal P^s_0(E_j):
E\subseteq\bigcup_{j=1}^\infty E_j
\right\}.
\]
The packing dimension of \(E\) is
\[
\dim_P(E)
=
\inf\{s\ge0:\mathcal P^s(E)=0\}.
\]
\end{definition}

Notice that \(\mathcal{P}^s_\delta(\emptyset)=\mathcal{P}^s_0(\emptyset)=0\)
and \(\dim_P(\emptyset)=0\).

We will use the following standard characterization of packing dimension
in terms of upper box-counting dimension.

\begin{definition}[{See \cite[Subsections~2.1 and~2.3]{Falconer2014}}]
\label{def:box-counting-dimensions}
For a nonempty bounded set \(F\subseteq\mathbb R^n\) and \(\delta>0\), let
\(N_\delta(F)\) be the least number of closed balls of radius \(\delta\)
needed to cover \(F\).  The upper box-counting dimension of \(F\) is
\[
\overline{\dim}_{\mathrm B}(F)
=
\limsup_{\delta\to0}
\frac{\log N_\delta(F)}{-\log\delta}.
\]
The base of the logarithm is immaterial in this ratio.

The upper modified box-counting dimension of a set
\(E\subseteq\mathbb R^n\) is defined by
\[
\overline{\dim}_{\mathrm{MB}}(\emptyset)=0,
\]
and, for \(E\ne\emptyset\), by
\[
\overline{\dim}_{\mathrm{MB}}(E)
=
\inf
\left\{
\sup_j \overline{\dim}_{\mathrm B}(E_j):
\begin{array}{l}
E\subseteq\bigcup_{j\in\omega} E_j,\\
E_j\ne\emptyset\text{ and }E_j\text{ is bounded for every }j
\end{array}
\right\}.
\]
\end{definition}

\begin{theorem}[{Box-counting characterization of packing dimension; see \cite[Proposition~3.9]{Falconer2014}}]
\label{thm:packing-modified-box-counting}
For every \(E\subseteq\mathbb R^n\),
\[
\dim_P(E)=\overline{\dim}_{\mathrm{MB}}(E).
\]
\end{theorem}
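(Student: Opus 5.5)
The plan is to prove the two inequalities separately. In each direction the bridge is a direct comparison between \(\delta\)-packings of a set \(F\) and the covering numbers \(N_\delta(F)\). The case \(E=\emptyset\) is trivial, since both sides are \(0\), so assume \(E\neq\emptyset\).

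\emph{Step 1: \(\dim_P(E)\le\overline{\dim}_{\mathrm{MB}}(E)\).} Fix a cover \(E\subseteq\bigcup_j E_j\) by nonempty bounded sets and any \(s>\sup_j\overline{\dim}_{\mathrm B}(E_j)\). It suffices to show \(\mathcal P^s_0(E_j)=0\) for every \(j\). Then \(\mathcal P^s(E)=0\) and \(\dim_P(E)\le s\), and taking the infimum over covers and over \(s\) gives the claim.

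So let \(F\) be nonempty and bounded, and pick \(t\) with \(\overline{\dim}_{\mathrm B}(F)<t<s\). Then \(N_{2^{-k}}(F)\le 2^{kt}\) for all large \(k\).

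Take a \(\delta\)-packing \((B_i)\) of \(F\) and group its balls by radius \(\rho_i\in(2^{-k-1},2^{-k}]\). Within one group, distinct centers are at distance \(>2^{-k}\), because the balls are disjoint and each radius exceeds \(2^{-k-1}\). A closed ball of radius \(2^{-k-2}\) has diameter \(2^{-k-1}\), so it contains at most one of these centers. Hence the group has at most \(N_{2^{-k-2}}(F)\) balls. Each of them contributes \((\diam B_i)^s\le 2^{(1-k)s}\). Therefore
\[
\sum_i(\diam B_i)^s\le 2^{s}\sum_{k\ge k_0(\delta)} N_{2^{-k-2}}(F)\,2^{-ks},
\]
where \(k_0(\delta)\to\infty\) as \(\delta\to0\). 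This series is dominated by a constant times \(\sum_k 2^{k(t-s)}\), which converges. Its tails tend to \(0\), so \(\mathcal P^s_0(F)=0\).

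\emph{Step 2: \(\overline{\dim}_{\mathrm{MB}}(E)\le\dim_P(E)\).} Let \(s>\dim_P(E)\). Then \(\mathcal P^s(E)=0\), so there is a cover \(E\subseteq\bigcup_j E_j\) with \(\mathcal P^s_0(E_j)<\infty\) for each \(j\). Intersect each \(E_j\) with the unit lattice cubes, which are countably many. Discard the empty pieces; at least one piece survives because \(E\neq\emptyset\). Reindex the survivors by \(\omega\), repeating a piece if only finitely many survive. Packing premeasures are monotone, so every piece \(F\) is nonempty, bounded, and satisfies \(\mathcal P^s_0(F)<\infty\). It then suffices to show \(\overline{\dim}_{\mathrm B}(F)\le s\).

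Choose \(\delta_0\) with \(\mathcal P^s_{\delta_0}(F)\le C<\infty\). For \(\delta<\delta_0\), take a maximal family of pairwise disjoint closed balls of radius \(\delta\) centered in \(F\). Such a family is finite, with \(M\) members say, since \(F\) is bounded. Because it is a \(\delta\)-packing, \(M(2\delta)^s\le C\). By maximality every point of \(F\) lies within \(2\delta\) of some chosen center, so \(N_{2\delta}(F)\le M\le C(2\delta)^{-s}\). This yields \(\overline{\dim}_{\mathrm B}(F)\le s\).

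I expect the main (mild) obstacle to be the bookkeeping in Step 1: the packing may use balls of wildly varying radii, so the dyadic grouping by radius and the separation-versus-covering count must be set up carefully for the geometric-series bound to go through. Step 2 is routine once the pieces are made bounded and nonempty, as the definition of \(\overline{\dim}_{\mathrm{MB}}\) requires.
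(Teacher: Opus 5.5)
Your proof is correct. The paper gives no proof of its own here; it imports the result from Falconer's Proposition~3.9, and your two steps are essentially the standard argument behind that citation: a maximal disjoint family of \(\delta\)-balls gives \(N_{2\delta}(F)\le C(2\delta)^{-s}\) when \(\mathcal P^s_0(F)<\infty\), and grouping packing balls dyadically by radius controls packings by covering numbers. The one variation is in Step~1, where you show \(\mathcal P^s_0(F)=0\) directly for \(s>\overline{\dim}_{\mathrm B}(F)\) and insert the cover straight into the definition of \(\mathcal P^s(E)\). Falconer instead proves \(\dim_P(F)\le\overline{\dim}_{\mathrm B}(F)\) by a pigeonhole contradiction and then uses countable stability of \(\dim_P\).
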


\subsection{Existing characterizations of algorithmic fractal dimensions}
\label{subsec:existing-characterizations}

For background on algorithmic fractal dimensions we refer to the survey by
Lutz and Mayordomo~\cite{LutzMayordomo2021} and to Downey and
Hirschfeldt~\cite[Chapter~13]{DowneyHirschfeldt2010}.  
 
We now recall the entries of Table~\ref{tab:test-complexity-gale}.  The
classical test entry is the definition of Hausdorff dimension recalled above:
dimension is characterized by the existence of covers whose total \(s\)-cost
is arbitrarily small.  The classical gale entry is Lutz's gale
characterization of Hausdorff dimension on Cantor
space~\cite[Theorem~3.6]{Lutz2003}, and the effective gale entry is effective
Hausdorff dimension, introduced in~\cite{Lutz2003b} by effectivizing it.

The
effective complexity entry is Mayordomo's characterization of effective
Hausdorff dimension in terms of prefix-free Kolmogorov complexity \(K\)~\cite{Mayordomo2002}:
for every \(X\in2^\omega\),
\[
\dim_e(X)
=
\liminf_{r\to\infty}\frac{K(X\upharpoonright r)}{r}.
\]
The effective test entry may be taken to be Tadaki's partial Martin-L\"of
randomness~\cite{Tadaki2002}, which provides the effective-test formulation
matching effective Hausdorff dimension; see also Downey and
Hirschfeldt~\cite[Proposition~13.5.3]{DowneyHirschfeldt2010}.

The
packing-dimension analogues of the gale and effective complexity entries are
due to Athreya, Hitchcock, Lutz, and
Mayordomo~\cite{AthreyaHitchcockLutzMayordomo2007}; in particular effective
packing dimension is characterized by
\[
\Dim_e(X)
=
\limsup_{r\to\infty}\frac{K(X\upharpoonright r)}{r}
\]
for every \(X\in2^\omega\).
 
The effective complexity characterizations were transferred to Euclidean
space by Lutz and Mayordomo~\cite{LutzMayordomo2008}.  For
\(x\in\mathbb R^n\), let
\begin{equation}
\label{eq:euclidean-point-complexity}
K_r(x)
=
\min\{K(q):q\in\mathbb Q^n,\ |x-q|<2^{-r}\}.
\end{equation}
Then define
\begin{equation}
\label{eq:euclidean-effective-dimensions}
\begin{aligned}
\dim_e(x)
&=
\liminf_{r\to\infty}
\frac{K_r(x)}{r},
&
\Dim_e(x)
&=
\limsup_{r\to\infty}
\frac{K_r(x)}{r}.
\end{aligned}
\end{equation}
These are the standard Euclidean effective dimensions defined via rational
point complexity.  In Subsection~\ref{subsec:rational-point-complexity} we
define their relativized versions \(\dim_e^A(x)\) and \(\Dim_e^A(x)\).
Corollary~\ref{cor:ball-point-effective-dimensions} shows that they can
equivalently be expressed using rational closed-ball complexity \(K^A(x;r)\).
For the empty oracle, these definitions recover the quantities
in~\eqref{eq:euclidean-effective-dimensions}.

This leaves the classical complexity entry, which is the subject of the
present paper.  For Cantor space, a coding-theoretic characterization of
Hausdorff dimension for combinatorial sources was obtained by
Ryabko~\cite{Ryabko1986}.
A number of further relations between Hausdorff
dimension and Kolmogorov complexity were surveyed by
Lutz~\cite[Section~6]{Lutz2003b}.
The differences between Ryabko's coding model and
the ball-code model used here
are discussed in the related work section of Section~\ref{sec:introduction}.
The results surveyed in~\cite[Section~6]{Lutz2003b}
concern effective complexity.

Finally, we recall the point-to-set principle, which will be derived in
Section~\ref{sec:point-to-set-principles}.
 
\begin{theorem}[Point-to-set principle of Lutz and Lutz~\cite{LutzLutz2018}]
\label{thm:lutz-lutz-point-to-set}
For every nonempty set \(E\subseteq\mathbb R^n\),
\[
\dim_H(E)
=
\min_{A\subseteq\omega}
\sup_{x\in E}
\liminf_{r\to\infty}\frac{K^A_r(x)}{r},
\qquad
\dim_P(E)
=
\min_{A\subseteq\omega}
\sup_{x\in E}
\limsup_{r\to\infty}\frac{K^A_r(x)}{r},
\]
where \(K^A_r\) denotes the relativization
of~\eqref{eq:euclidean-point-complexity} to the oracle \(A\).
\end{theorem}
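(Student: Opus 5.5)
We derive the principle from Theorem~\ref{thm:intro-main}, proving the two inequalities for each dimension separately and letting the attained minimum over ball codes give an attained minimum over oracles. Throughout, the point is that additive constants and additive \(O(\log r)\) terms do not change \(\liminf_r(\cdot)/r\) or \(\limsup_r(\cdot)/r\). So it suffices to compare \(K_\varphi(x;r)\) and \(K^A_r(x)\) up to such errors and up to a bounded shift in the precision \(r\).

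\emph{Every oracle yields a ball code.} Fix \(A\subseteq\omega\) and an optimal oracle prefix-free machine \(U^A\). Define \(\varphi_A(p)=\overline B(q,2^{-m})\) whenever \(U^A(p)\) outputs a pair \((q,m)\in\mathbb Q^n\times\omega\). Since \(\dom U^A\) is prefix-free, \(\varphi_A\) is a prefix-free ball code.

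Now let \(x\in\mathbb R^n\) and let \(q\in\mathbb Q^n\) satisfy \(|x-q|<2^{-r}\) and \(K^A(q)=K^A_r(x)\). Then \(x\in\overline B(q,2^{-r})\), and this ball has diameter \(2^{1-r}\). A program for the pair \((q,r)\) costs at most \(K^A(q)+K(r)+O(1)\le K^A_r(x)+O(\log r)\). Hence
\[
K_{\varphi_A}(x;r-1)\le K^A_r(x)+O(\log r).
\]
Dividing by \(r\) and passing to \(\liminf\) or \(\limsup\) gives, for each \(x\), that the ball-code rate of \(\varphi_A\) is at most the corresponding oracle rate. Taking \(\sup_{x\in E}\) and invoking Theorem~\ref{thm:intro-main}, we get \(\dim_H(E)\) and \(\dim_P(E)\) bounded above by the oracle expressions, for every \(A\).

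\emph{Every code yields an oracle.} Let \(\varphi\) attain the minimum in Theorem~\ref{thm:intro-main}. First we rationalize it. Replace each ball \(\varphi(p)\) of diameter \(d>0\) by a rational closed ball \(\psi(p)\supseteq\varphi(p)\) of diameter at most \(2d\): choose a rational center near the old center and a rational radius slightly larger than \(d/2\). Degenerate balls of diameter \(0\) are handled by choosing a tiny rational radius, which also stays below twice any admissible threshold. The domain is unchanged, so \(\psi\) is prefix-free, and
\[
K_\psi(x;r-1)\le K_\varphi(x;r).
\]
Next, let \(A\) encode the countable graph \(\{(p,\psi(p))\}\). An oracle machine \(M^A\) on input \(p\) looks up \(p\) in \(A\) and outputs the center of \(\psi(p)\). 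Its domain is \(\dom\psi\), which is prefix-free, so by optimality \(K^A(c)\le|p|+O(1)\) for every center \(c\) of a named ball.

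If \(x\in\psi(p)\) and \(\diam\psi(p)\le2^{-r}\), then \(|x-c|\le2^{-r-1}<2^{-r}\). Therefore
\[
K^A_r(x)\le K_\psi(x;r)+O(1)\le K_\varphi(x;r+1)+O(1).
\]
This yields the reverse inequalities, with the minimum attained at this \(A\).

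\emph{Main obstacle.} The delicate step is the rationalization. The rational ball must contain the original ball, keep its diameter within a factor \(2\), and preserve prefix-freeness, so that only a one-step precision shift is lost. The remaining details are routine: strict versus non-strict inequalities in \(|x-q|<2^{-r}\), and the \(K(r)=O(\log r)\) overhead for encoding the radius.
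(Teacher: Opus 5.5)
Your proof is correct and follows essentially the paper's route. You rationalize a minimizing code with a one-step loss of precision, store it in an oracle, and compare ball complexity with rational-point complexity at a cost of $K(r)=O(\log r)$; the only difference is that you fold the paper's intermediate coding form, with closed-ball complexity $K^A(x;r)$, into direct comparisons.

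There are two small points to fix.

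\begin{itemize}
\item Your aside on degenerate balls is unnecessary, since ball codes have positive radius by definition. It is also false as stated: a radius-zero ball qualifies at every precision $r$, and no fixed positive rational radius stays below $2^{-r}$ for all $r$.
\item To invoke optimality as the paper defines it, your lookup machine should also reject $p$ when some proper prefix of $p$ is marked in $A$. This makes its domain prefix-free for every oracle, not just for your particular $A$.
\end{itemize}
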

 
Theorem~\ref{thm:lutz-lutz-point-to-set} is
recovered in Corollary~\ref{cor:rational-point-point-to-set}.

Lutz, Lutz, and Mayordomo~\cite{LutzLutzMayordomo2023} extended the
point-to-set principle to arbitrary separable metric spaces and to a large
class of gauge families.

Mayordomo~\cite{Mayordomo2025} proved a finite-state point-to-set principle on
\([0,1)\), using information content at finite precision and minimization over
separator enumerators.
Her result concerns only Hausdorff dimension and is derived from the existing effective point-to-set principle.

\section{Ball Codes and Hausdorff Dimension}
\label{sec:hausdorff-ball-codes}

Closed balls provide a common coding language for both dimensions.
For Hausdorff dimension, arbitrary covering sets may be replaced by
balls at the cost of a fixed factor in the \(s\)-sum, while the
box-counting characterization used for packing dimension is already
formulated in terms of ball covers.

\subsection{Prefix-free ball codes and induced complexity}

We now introduce the prefix-free coding-theoretic notions used in the main
Hausdorff characterization.
A ball code should be thought of as a codebook: a
possibly non-computable assignment of binary names to balls.

\begin{definition}
\label{def:prefix-free-ball-code}
A prefix-free ball code in \(\mathbb R^n\) is a map
\[
\varphi:D\to\mathcal B,
\]
where \(D\subseteq2^{<\omega}\) is prefix-free and \(\mathcal B\) is the
collection of all closed balls of positive radius in \(\mathbb R^n\).  Thus
radius-zero degenerate balls are not allowed.
\end{definition}

\begin{definition}
\label{def:prefix-free-ball-code-dimension}
Let \(\varphi\) be a prefix-free ball code.
For \(x\in\mathbb R^n\) and \(r\in\omega\), define
\[
K_\varphi(x;r)
=
\min
\left\{
|p|
:
p\in\dom(\varphi),
x\in \varphi(p),
\ \diam(\varphi(p))\le 2^{-r}
\right\},
\]
with the convention that \(K_\varphi(x;r)=\infty\) if no such \(p\) exists.
Define
\[
\dim_\varphi(x)
=
\liminf_{r\to\infty}
\frac{K_\varphi(x;r)}{r}.
\]
Finally, for nonempty \(E\subseteq\mathbb R^n\), define
\[
\dim_\varphi(E)
=
\sup_{x\in E}\dim_\varphi(x).
\]
\end{definition}

Throughout,
semicolon notation denotes closed-ball complexity at precision \(r\), with
\(K^A(x;r)\) restricted to rational closed balls, while subscript notation such
as \(K^A_r(x)\) denotes rational point complexity at precision \(r\).

\subsection{Hausdorff dimension via prefix-free ball codes}

The main result of this section is the following characterization of Hausdorff
dimension.

\begin{theorem}
\label{thm:prefix-free-ball-code-hausdorff}
For every nonempty \(E\subseteq\mathbb R^n\),
\[
\dim_H(E)=\min_\varphi \dim_\varphi(E),
\]
where the minimum is taken over all prefix-free ball codes \(\varphi\).
\end{theorem}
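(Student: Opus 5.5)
The proof splits into a lower bound, \(\dim_H(E)\le\dim_\varphi(E)\) for every prefix-free ball code \(\varphi\), and the construction of a single code \(\varphi\) with \(\dim_\varphi(E)\le\dim_H(E)\). Together these show that the infimum is attained, so it is a minimum.

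\emph{Lower bound.} Fix \(\varphi\) and \(s>\dim_\varphi(E)\). Pick \(s'\) with \(\dim_\varphi(E)<s'<s\). For every \(x\in E\) there are infinitely many \(r\) with \(K_\varphi(x;r)\le s'r\). Hence, for each \(R\), the family
\[
\mathcal C_R=\{\varphi(p): p\in\dom\varphi,\ \diam\varphi(p)\le2^{-r},\ |p|\le s'r \text{ for some } r\ge R\}
\]
covers \(E\), and every member has diameter at most \(2^{-R}\). For a name \(p\) used at precision \(r\) we have \(r\ge |p|/s'\), so \((\diam\varphi(p))^s\le 2^{-rs}\le 2^{-|p|s/s'}\). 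Summing over all \(p\in\dom\varphi\) with \(|p|\ge s'R\) (each ball is counted at most once per name) gives
\[
\sum 2^{-|p|s/s'}\le \sum_{\ell\ge s'R} 2^{\ell}\,2^{-\ell s/s'} .
\]
This crude count by length diverges when \(s/s'<1\). I would avoid it by using Kraft's inequality, \(\sum_{p\in\dom\varphi}2^{-|p|}\le1\), which holds because \(\dom\varphi\) is prefix-free. The term \(2^{-|p|s/s'}\) with \(s/s'>1\) is at most \(2^{-|p|}\cdot 2^{-|p|(s/s'-1)}\le 2^{-|p|}2^{-s'R(s/s'-1)}\). So the total \(s\)-cost is at most \(2^{-R(s-s')}\to0\). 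Names with \(|p|<s'R\) cannot occur here, since \(|p|\le s'r\) only gives an upper bound on \(|p|\). I would handle this by restricting to names with \(|p|\ge s'R/2\) or so. The precise fix is to require that \(K_\varphi(x;r)\le s'r\) with \(|p|\) large, which is automatic: there are only finitely many short names, and each names a ball of fixed positive radius. Any \(x\) covered only by short names at arbitrarily small diameters is impossible, since the radii are positive. Thus \(\mathcal H^s(E)=0\), and \(\dim_H(E)\le s\).

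\emph{Upper bound.} Let \(d=\dim_H(E)\) and take rationals \(s_k\downarrow d\). For each \(k\) and each \(m\ge1\), \(\mathcal H^{s_k}(E)=0\). Replacing each covering set by a closed ball of at most twice the diameter, I get a countable cover of \(E\) by balls \(B_{k,m,i}\) with diameters at most \(2^{-m}\) and \(\sum_i(\diam B_{k,m,i})^{s_k}\le 2^{-m}\); positive radii are enforced by enlarging slightly. Each ball receives a weight
\[
w_{k,m,i}=2^{-k-m-2}\,(\diam B_{k,m,i})^{s_k}\cdot 2^{m},
\]
so that the total weight is at most \(1\). Kraft's inequality then gives a prefix-free code in which \(B_{k,m,i}\) has a name of length \(\lceil -\log w\rceil\le s_k r + k + O(1)\), where \(r=\lfloor-\log\diam B\rfloor\). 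Every \(x\in E\) lies, for each \(m\), in some \(B_{k,m,i}\) with \(r\ge m\). Since \(m\) is arbitrary, this gives infinitely many precisions \(r\) with \(K_\varphi(x;r)\le s_kr+k+O(1)\). Hence \(\dim_\varphi(x)\le s_k\) for every \(k\), so \(\dim_\varphi(E)\le d\).

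The main obstacle is the lower-bound bookkeeping: the cover \(\mathcal C_R\) must be chosen so that Kraft's inequality genuinely controls the \(s\)-cost with a factor tending to zero. I will make this rigorous by discarding finitely many names, namely those whose balls have diameter larger than \(2^{-R}\). Every remaining name \(p\) used at a precision \(r\ge R\) with \(|p|\le s'r\) then satisfies \((\diam)^s\le 2^{-rs}\le 2^{-|p|}2^{-r(s-s')}\) for \(s'\le 1\). When \(s'>1\) the bound is trivial, because \(\dim_H\le n\) and I may assume \(s\le n\). This still leaves the cases \(1<s'<n\), which I would treat by rescaling with \(2^{-|p|s/s'}\) as above.
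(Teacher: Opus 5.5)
Your proof is correct. Like the paper, it translates between codes and covers through Kraft's inequality in both directions, but both halves are organized somewhat differently.

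\emph{Lower bound.} The obstacle you identify is not really there. If $|p|\le s'r$ and $r\ge R$, then
\[
(\diam\varphi(p))^s\le 2^{-sr}=2^{-s'r}\,2^{-(s-s')r}\le 2^{-|p|}\,2^{-(s-s')R}
\]
holds for every $s'<s$. Kraft's inequality on the prefix-free domain then gives $\mathcal H^s_{2^{-R}}(E)\le 2^{-(s-s')R}$ directly. No lower bound on $|p|$ is needed, and the restriction to $s'\le 1$ in your final paragraph is spurious. The case analysis that follows it is unnecessary, and the assertion that $s'>1$ is trivial is false once $n\ge2$.

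Your detour still works. Only finitely many strings have length below any fixed $L$, and each names a ball of positive radius. Hence the witnesses for $K_\varphi(x;r)\le s'r$ have lengths tending to infinity, and the family restricted to names with $|p|\ge s'R$ still covers $E$.

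The paper does the same computation with an additive rather than a multiplicative slack. From $|p|<sr-k$ it gets $(\diam\varphi(p))^s<2^{-k}2^{-|p|}$, so the set of points having such names at infinitely many precisions has $\mathcal H^s$-measure at most $2^{-k}$.

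\emph{Upper bound.} You make one Kraft request over all triples $(k,m,i)$ with weights $2^{-k-2}(\diam B_{k,m,i})^{s_k}$. This gives $K_\varphi(x;r)\le s_kr+k+O(1)$ at infinitely many $r$, which suffices.

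The paper instead first proves a fixed-exponent statement: $\mathcal H^s(E)=0$ if and only if some prefix-free ball code satisfies $\liminf_{r}(K_\varphi(x;r)-sr)=-\infty$ for every $x\in E$. This uses requests with additive savings $k$. It then merges the codes obtained for rationals $q_j\downarrow\dim_H(E)$ by prefixing them with the pairwise incomparable strings $1^j0$.

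Your route is shorter. The paper's route yields that sharper fixed-exponent nullity characterization as a by-product.
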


\medskip
\noindent\textit{Strategy of the proof.}
The proof has the same general form as the Levin--Schnorr theorem in
algorithmic randomness~\cite[Theorem~6.2.3]{DowneyHirschfeldt2010}: the two
inequalities are obtained by translating between \emph{codes} and
\emph{covers}, in the two directions:
\begin{itemize}
\item \emph{From a code to covers} (Lemma~\ref{lem:code-to-cover}).
  Fix \(s\).  A name \(p\) satisfying \(|p|<sr-k\) for a ball of diameter
  \(\le2^{-r}\) contributes at most \(2^{-k}2^{-|p|}\) to the \(s\)-Hausdorff
  sum.  Prefix-freeness turns the sum of these weights into a Kraft sum, so the
  set of points with such short names at infinitely many precisions has
  \(s\)-Hausdorff measure at most \(2^{-k}\).  Letting \(k\to\infty\) gives
  \(\dim_H(E)\le\dim_\varphi(E)\).
\item \emph{From covers to a code} (Theorem~\ref{thm:fixed-s-hausdorff-null-code}
  and Lemma~\ref{lem:cover-to-code}).  At a fixed exponent \(s\),
  \(\mathcal H^s(E)=0\) is equivalent to the existence of a code whose names
  are shorter than \(sr\) by arbitrarily large additive constants.  Applying
  this to a rational sequence \(q_j\downarrow\dim_H(E)\) gives codes
  \(\varphi_j\); prefixing these countably many codebooks by pairwise
  incomparable strings yields a single prefix-free code \(\varphi\) with
  \(\dim_\varphi(x)\le q_j\) for every \(x\in E\) and every \(j\).
\end{itemize}
Lemma~\ref{lem:code-to-cover} gives \(\dim_H(E)\le\inf_\varphi\dim_\varphi(E)\),
and Lemma~\ref{lem:cover-to-code} gives the reverse inequality together with the
attainment of the infimum.

\begin{lemma}[Short names give small Hausdorff measure]
\label{lem:short-names-small-measure}
Let \(\varphi\) be a prefix-free ball code, let \(s\ge0\), and let
\(k\in\omega\).  Put
\[
F_k^s(\varphi)
=
\bigl\{x\in\mathbb R^n:
K_\varphi(x;r)<sr-k\text{ for infinitely many }r\bigr\}.
\]
Then
\[
\mathcal H^s(F_k^s(\varphi))\le 2^{-k}.
\]
\end{lemma}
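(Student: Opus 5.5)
The plan is a direct Kraft-sum cover argument. Fix $\delta>0$ and choose $r_0$ with $2^{-r_0}\le\delta$. Let $D_{r_0}$ be the set of names $p\in\dom\varphi$ for which some $r\ge r_0$ satisfies
\[
|p|<sr-k \quad\text{and}\quad \diam\varphi(p)\le 2^{-r}.
\]
Every $x\in F^s_k(\varphi)$ has such an $r\ge r_0$ with $K_\varphi(x;r)<sr-k$. A witnessing name $p$ then lies in $D_{r_0}$ and satisfies $x\in\varphi(p)$. Hence the family $(\varphi(p))_{p\in D_{r_0}}$ covers $F^s_k(\varphi)$. The family is countable because $2^{<\omega}$ is. Each member has positive diameter (positive radius) and diameter at most $2^{-r_0}\le\delta$.

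Next I bound the $s$-cost of this cover. For $p\in D_{r_0}$, take the witnessing $r$. Then $|p|<sr-k$ gives $2^{-sr}<2^{-k}2^{-|p|}$, and $s\ge0$ gives
\[
(\diam\varphi(p))^s\le 2^{-sr}\le 2^{-k}2^{-|p|}.
\]
When $s=0$ the first inequality is trivial. If this case is worrying, note that $|p|<-k\le 0$ is impossible unless $k=0$, and even then $|p|<0$ never holds. So for $s=0$ the set $F^0_k$ is empty, and the bound holds trivially.

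Summing over $D_{r_0}\subseteq\dom\varphi$ and using prefix-freeness (Kraft's inequality $\sum_{p\in\dom\varphi}2^{-|p|}\le1$) gives
\[
\mathcal H^s_\delta(F^s_k(\varphi))\le 2^{-k}\sum_{p}2^{-|p|}\le 2^{-k}.
\]
Letting $\delta\to0$ yields $\mathcal H^s(F^s_k(\varphi))\le2^{-k}$.

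There is no real obstacle. The only points to check are these:
\begin{itemize}
\item each name is counted once even if it witnesses several $r$, which is why I index the cover by names rather than by pairs $(p,r)$;
\item Definition~\ref{def:hausdorff-dimension} requires positive diameters, which the ball code guarantees;
\item the "infinitely many $r$" hypothesis is used only to ensure a witness $r\ge r_0$ exists for every $r_0$.
\end{itemize}
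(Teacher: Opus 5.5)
Your proof is correct and is essentially the paper's argument: the paper uses the same set \(D_{k,m}\) of names with some witnessing precision \(r\ge m\), bounds \((\diam\varphi(p))^s\le 2^{-sr}<2^{-k}2^{-|p|}\), and applies Kraft's inequality to get \(\mathcal H^s_{2^{-m}}(F_k^s(\varphi))\le 2^{-k}\) before letting \(m\to\infty\). Your separate treatment of \(s=0\) is unnecessary, since the general argument already covers it (the name set is empty there), but its conclusion \(F_k^0(\varphi)=\emptyset\) is right.
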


\begin{proof}
For \(m\in\omega\), let
\[
D_{k,m}
=
\bigl\{\,p\in\dom(\varphi):
\text{ for some }r\ge m,
\diam(\varphi(p))\le2^{-r}\text{ and }|p|<sr-k\,\bigr\}.
\]
If \(p\in D_{k,m}\) and \(r\) witnesses this, then
\[
(\diam\varphi(p))^s
\le 2^{-sr}
< 2^{-k}2^{-|p|}.
\]
Since \(\dom(\varphi)\) is prefix-free,
\[
\sum_{p\in D_{k,m}}(\diam\varphi(p))^s
\le 2^{-k}\sum_{p\in D_{k,m}}2^{-|p|}
\le 2^{-k}.
\]
All \(\varphi(p)\) for \(p\in D_{k,m}\) have diameter at most \(2^{-m}\).

The family \(\{\varphi(p):p\in D_{k,m}\}\) covers \(F_k^s(\varphi)\).  Indeed,
if \(x\in F_k^s(\varphi)\), choose a witnessing precision \(r\ge m\) and a
shortest name \(p\) for \(x\) at precision \(r\).  Then \(p\in D_{k,m}\) and
\(x\in\varphi(p)\).  Hence
\[
\mathcal H^s_{2^{-m}}(F_k^s(\varphi))
\le
\sum_{p\in D_{k,m}}(\diam\varphi(p))^s
\le 2^{-k}.
\]
Letting \(m\to\infty\) gives the result.
\end{proof}

\begin{lemma}[From a code to covers]
\label{lem:code-to-cover}
For every nonempty \(E\subseteq\mathbb R^n\) and every prefix-free ball code
\(\varphi\),
\[
\dim_H(E)\le \dim_\varphi(E).
\]
\end{lemma}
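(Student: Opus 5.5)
The plan is to deduce the inequality directly from Lemma~\ref{lem:short-names-small-measure}, together with monotonicity and countable subadditivity of \(\mathcal H^s\). If \(\dim_\varphi(E)=\infty\) there is nothing to prove. Otherwise, fix any real \(s>\dim_\varphi(E)\). It suffices to show \(\mathcal H^s(E)=0\), since then \(\dim_H(E)\le s\), and letting \(s\downarrow\dim_\varphi(E)\) gives the claim.

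The key step is to show that \(E\subseteq F_k^s(\varphi)\) for every \(k\in\omega\). Take \(x\in E\). Then \(\dim_\varphi(x)\le\dim_\varphi(E)<s\), so choose \(t\) with \(\dim_\varphi(x)<t<s\). Since \(\liminf_r K_\varphi(x;r)/r<t\), there are infinitely many \(r\) with \(K_\varphi(x;r)<tr\); in particular \(K_\varphi(x;r)\) is finite for these \(r\). For all such \(r\) with \(r>k/(s-t)\) we have \(tr\le sr-k\), hence \(K_\varphi(x;r)<sr-k\). This holds for infinitely many \(r\), so \(x\in F_k^s(\varphi)\).

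By monotonicity of \(\mathcal H^s\) and Lemma~\ref{lem:short-names-small-measure}, this gives \(\mathcal H^s(E)\le\mathcal H^s(F_k^s(\varphi))\le 2^{-k}\) for every \(k\). Letting \(k\to\infty\) yields \(\mathcal H^s(E)=0\), so \(\dim_H(E)\le s\), as required.

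There is no real obstacle here. The only point needing care is the strict slack \(t<s\), which absorbs the additive constant \(k\) for large \(r\). Using \(s=\dim_\varphi(E)\) itself would not work, since the liminf need not be attained with the additive margin \(k\). This is why we pass through \(s>\dim_\varphi(E)\) and then take the infimum.
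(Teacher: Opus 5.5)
Your proof is correct and follows essentially the same route as the paper: you fix \(s>\dim_\varphi(E)\) and use a slack \(t<s\) (the paper writes \(s-\varepsilon\)) to absorb \(k\), giving \(E\subseteq F_k^s(\varphi)\) for every \(k\); you then apply Lemma~\ref{lem:short-names-small-measure} and let \(k\to\infty\). Only monotonicity of \(\mathcal H^s\) is actually needed, so the mention of countable subadditivity can be dropped.
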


\begin{proof}
Put \(\alpha=\dim_\varphi(E)\).  If \(\alpha=\infty\), there is nothing to
prove.  Fix \(s>\alpha\).  For \(k\in\omega\), let \(F_k=F_k^s(\varphi)\).
By Lemma~\ref{lem:short-names-small-measure},
\[
\mathcal H^s(F_k)\le2^{-k}.
\]
If \(x\in E\), then \(\dim_\varphi(x)\le\alpha<s\).  For each fixed \(k\), choose
\(\varepsilon>0\) with \(\dim_\varphi(x)<s-\varepsilon\); along infinitely many
sufficiently large \(r\), we have \(K_\varphi(x;r)<(s-\varepsilon)r<sr-k\).
Thus \(x\in F_k\), so \(E\subseteq F_k\) for every \(k\).  Therefore
\[
\mathcal H^s(E)\le\mathcal H^s(F_k)\le2^{-k}
\]
for every \(k\).  Letting \(k\to\infty\), we get \(\mathcal H^s(E)=0\).  Since
this holds for every \(s>\alpha\), \(\dim_H(E)\le\alpha\).
\end{proof}

For the proof from covers to a code,
we use the following converse to Kraft's inequality from information theory;
see~\cite[Theorem~5.2.2]{CoverThomas2006}.
The effective version is called the KC theorem in algorithmic randomness;
see~\cite[Theorem~3.6.1]{DowneyHirschfeldt2010}.
We say that two strings are incomparable if neither is a prefix of the other;
a set of pairwise incomparable strings is therefore prefix-free.

\begin{lemma}[Extended Kraft's inequality]
\label{lem:kraft}
Let \((\ell_i)_{i\in\omega}\) be a sequence of natural numbers such that
\(\sum_{i}2^{-\ell_i}\le 1\).
Then there is a pairwise distinct and pairwise incomparable sequence
\((p_i)_{i\in\omega}\) in \(2^{<\omega}\) with \(|p_i|=\ell_i\) for every
\(i\).
\end{lemma}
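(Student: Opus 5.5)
The plan is to assign codewords greedily in order of increasing length, so that each new codeword is the lexicographically least available string of its length. First I would reduce to the case where the sequence is nondecreasing. Since \(\sum_i 2^{-\ell_i}\le1\), each length value \(\ell\) occurs at most \(2^{\ell}\) times, so the multiset \(\{\ell_i\}\) can be listed in nondecreasing order by a bijection \(\pi:\omega\to\omega\) (when the sequence is finite-valued-bounded this is immediate; in general each value occurs finitely often, so sorting by value and then by index gives an order of type \(\omega\) provided infinitely many indices occur, and type a finite ordinal otherwise). The required strings are then built for the sorted sequence and transported back along \(\pi\); pairwise distinctness and incomparability are preserved by relabeling.

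For a nondecreasing sequence \(\ell_0\le\ell_1\le\cdots\), I would use the standard dyadic-interval construction. Put \(a_i=\sum_{j<i}2^{-\ell_j}\), so that \(a_i<1\) because \(a_i+2^{-\ell_i}\le1\), and let \(p_i\) be the first \(\ell_i\) binary digits of \(a_i\), taking the finite expansion. Because every earlier term has length \(\ell_j\le\ell_i\), \(a_i\) is an integer multiple of \(2^{-\ell_i}\). Hence \(p_i\) is exactly the binary string of length \(\ell_i\) whose dyadic interval is \(I_i=[a_i,a_i+2^{-\ell_i})\subseteq[0,1)\). These intervals are pairwise disjoint, since they are consecutive and non-overlapping. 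Two strings are comparable exactly when one of their dyadic intervals contains the other, and disjoint nonempty intervals admit no containment. So the \(p_i\) are pairwise incomparable, hence also pairwise distinct.

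The only real subtlety is the reordering step, because an arbitrary sequence need not be nondecreasing, and an infinite sequence in which some values occur infinitely often could not be sorted into order type \(\omega\). The Kraft sum rules this out, since each value appears at most \(2^\ell\) times. I would state this explicitly and then define \(\pi\) by listing all indices with \(\ell_i=0\), then all with \(\ell_i=1\), and so on. Everything else is routine checking of the dyadic arithmetic.
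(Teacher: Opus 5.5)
Your proof is correct; the paper gives no proof of its own and cites Cover and Thomas, Theorem~5.2.2, whose converse direction is essentially this construction: sort the lengths nondecreasingly and assign consecutive dyadic intervals from the left end of $[0,1)$. Your observation that each value $\ell$ occurs at most $2^{\ell}$ times is what justifies the reordering into order type $\omega$, a step that is usually passed over with ``without loss of generality.''
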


\begin{lemma}[Small Hausdorff measure gives small requests]
\label{lem:hausdorff-requests}
Let \(E\subseteq\mathbb R^n\) be nonempty, let \(s>0\), let \(k\ge0\) and
\(m\ge1\) be integers with \(sm\ge k\), and let \(\eta>0\).  If
\[
\mathcal H^s_{2^{-(m+2)}}(E)<2^{-s-k}\eta,
\]
then there are a finite or countable index set \(I\), closed balls
\((B_i)_{i\in I}\), and natural numbers \((\ell_i)_{i\in I}\) such that
\[
E\subseteq\bigcup_{i\in I} B_i,
\qquad
0<\diam(B_i)<2^{-m},
\qquad
\sum_{i\in I}2^{-\ell_i}<\eta,
\]
and
\[
\ell_i<s\bigl(-\log_2\diam(B_i)\bigr)+1-k
\]
for every \(i\in I\).  Thus small \(s\)-Hausdorff content at scale
\(2^{-(m+2)}\) gives a cover whose balls can be requested with small total
Kraft weight and with a prescribed saving \(k\) in the requested lengths.
\end{lemma}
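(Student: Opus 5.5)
The plan is to unwind the definition of \(\mathcal H^s_{2^{-(m+2)}}\), enlarge each covering set to a closed ball, and then take \(\ell_i\) to be the least integer that respects the required length bound. The Kraft weight will then be controlled by the \(s\)-sum of the ball diameters.

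\emph{Step 1: pass from covering sets to balls.} Since \(\mathcal H^s_{2^{-(m+2)}}(E)<2^{-s-k}\eta\), Definition~\ref{def:hausdorff-dimension} gives a finite or countable family \((U_i)_{i\in I}\) with
\[
E\subseteq\bigcup_i U_i,\qquad 0<\diam U_i\le 2^{-(m+2)},\qquad \sum_i(\diam U_i)^s<2^{-s-k}\eta .
\]
Each \(U_i\) is nonempty because \(\diam U_i>0\). Pick \(x_i\in U_i\) and let \(B_i\) be the closed ball of radius \(\diam U_i\) centered at \(x_i\). Then \(U_i\subseteq B_i\), so the \(B_i\) still cover \(E\). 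Also
\[
d_i:=\diam B_i=2\diam U_i\in(0,2^{-(m+1)}],
\]
so \(0<d_i<2^{-m}\). Moreover \(\sum_i d_i^s=2^s\sum_i(\diam U_i)^s<2^{-k}\eta\).

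\emph{Step 2: choose the lengths.} Put \(t_i=s(-\log_2 d_i)-k\) and \(\ell_i=\lceil t_i\rceil\), so that \(\ell_i<t_i+1\), which is exactly the required inequality on \(\ell_i\). For \(\ell_i\) to be a natural number we need \(t_i\ge0\). This follows from \(d_i<2^{-m}\), \(s>0\) and \(sm\ge k\): indeed \(s(-\log_2 d_i)>sm\ge k\). This is the only place the hypothesis \(sm\ge k\) is used, and it is the one point where some care is needed.

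\emph{Step 3: bound the Kraft weight.} Since \(\ell_i\ge t_i\), we have \(2^{-\ell_i}\le 2^{-t_i}=2^{k}d_i^s\). Summing over \(i\) gives
\[
\sum_i 2^{-\ell_i}\le 2^k\sum_i d_i^s<\eta .
\]
This completes the verification of all four required properties.
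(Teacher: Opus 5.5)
Your proposal is correct and follows the paper's proof step for step. You take an \(s\)-cover by sets of diameter at most \(2^{-(m+2)}\), enlarge each \(U_i\) to \(\overline B(x_i,\diam U_i)\) at the cost of a factor \(2^s\), set \(\ell_i=\lceil -s\log_2\diam B_i\rceil-k\), use \(sm\ge k\) to get \(\ell_i\in\omega\), and bound the Kraft sum by \(2^k\sum_i(\diam B_i)^s\).
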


\begin{proof}
Choose a finite or countable cover \((U_i)_{i\in I}\) of \(E\) with
\(0<\diam(U_i)\le2^{-(m+2)}\) and
\[
\sum_{i\in I} (\diam U_i)^s<2^{-s-k}\eta.
\]
For each \(i\), choose \(a_i\in U_i\) and put
\(B_i=\overline B(a_i,\diam U_i)\).  Then \(B_i\supseteq U_i\),
\(0<\diam(B_i)<2^{-m}\), and
\[
\sum_{i\in I}(\diam B_i)^s
=2^s\sum_{i\in I}(\diam U_i)^s
<2^{-k}\eta.
\]
Set
\[
\ell_i=\bigl\lceil -s\log_2\diam(B_i)\bigr\rceil-k .
\]
Since \(\diam(B_i)<2^{-m}\) and \(sm\ge k\),
\[
-s\log_2\diam(B_i)>sm\ge k,
\]
so \(\ell_i\in\omega\).  The inequality \(\lceil t\rceil<t+1\) gives the
required upper bound on \(\ell_i\).  Moreover,
\[
\sum_{i\in I}2^{-\ell_i}
\le
2^k\sum_{i\in I}(\diam B_i)^s
<\eta.
\]
\end{proof}

\begin{theorem}[Hausdorff nullity at a fixed exponent]
\label{thm:fixed-s-hausdorff-null-code}
Let \(s>0\) and let \(E\subseteq\mathbb R^n\) be nonempty.  Then
\[
\mathcal H^s(E)=0
\]
if and only if there is a prefix-free ball code \(\varphi\) such that, for every
\(x\in E\),
\[
\liminf_{r\to\infty}\bigl(K_\varphi(x;r)-sr\bigr)=-\infty.
\]
\end{theorem}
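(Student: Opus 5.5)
The backward direction will come from Lemma~\ref{lem:short-names-small-measure}. Suppose \(\varphi\) is a prefix-free ball code with \(\liminf_{r}(K_\varphi(x;r)-sr)=-\infty\) for every \(x\in E\). Then for each \(k\in\omega\) and each \(x\in E\) there are infinitely many \(r\) with \(K_\varphi(x;r)<sr-k\). Hence \(E\subseteq F_k^s(\varphi)\), and so \(\mathcal H^s(E)\le 2^{-k}\) for every \(k\), which gives \(\mathcal H^s(E)=0\).

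For the forward direction, assume \(\mathcal H^s(E)=0\). Then \(\mathcal H^s_\delta(E)=0\) for every \(\delta>0\), because \(\mathcal H^s_\delta\) is nondecreasing as \(\delta\downarrow0\). For each \(k\ge1\) choose an integer \(m_k\ge k\) with \(sm_k\ge k\). Apply Lemma~\ref{lem:hausdorff-requests} with parameters \(k\), \(m=m_k\) and \(\eta=2^{-k}\); this is allowed since the hypothesis \(\mathcal H^s_{2^{-(m_k+2)}}(E)=0<2^{-s-2k}\) holds. The lemma gives balls \((B_{k,i})_{i\in I_k}\) covering \(E\) with \(\diam B_{k,i}<2^{-m_k}\), and lengths \(\ell_{k,i}<s(-\log_2\diam B_{k,i})+1-k\) satisfying \(\sum_i 2^{-\ell_{k,i}}<2^{-k}\). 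Summing over \(k\ge1\), the total Kraft weight of all requests is less than \(1\). The index set \(\bigcup_k\{k\}\times I_k\) is countable, so Lemma~\ref{lem:kraft} yields pairwise incomparable strings \(p_{k,i}\) with \(|p_{k,i}|=\ell_{k,i}\). (If the set is finite, pad it with dummy lengths or use the finite version.) We then define \(\varphi(p_{k,i})=B_{k,i}\), which is a prefix-free ball code because every \(B_{k,i}\) has positive radius.

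It remains to check the liminf condition. Fix \(x\in E\) and \(k\ge1\), and pick \(i\) with \(x\in B_{k,i}\). Let \(r=\lfloor -\log_2\diam B_{k,i}\rfloor\). Then \(\diam B_{k,i}\le 2^{-r}\) and \(r\ge m_k\ge k\), since \(\diam B_{k,i}<2^{-m_k}\). Also \(-\log_2\diam B_{k,i}<r+1\). Therefore
\[
K_\varphi(x;r)\le \ell_{k,i}<s(r+1)+1-k,
\]
so \(K_\varphi(x;r)-sr<s+1-k\) at some precision \(r\ge k\). Letting \(k\to\infty\) gives precisions \(r\to\infty\) along which \(K_\varphi(x;r)-sr\to-\infty\), which is what we want.

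The main obstacle is this last rounding step. The length bound in Lemma~\ref{lem:hausdorff-requests} is stated in terms of \(-\log_2\diam\), while \(K_\varphi\) is indexed by integer precisions \(r\) with \(\diam\le2^{-r}\). Taking the floor loses at most one unit of precision, which costs an additive \(s\) in the length. That loss is harmless here only because the savings \(k\) are unbounded. A second point to get right is that \(m_k\to\infty\), so that the witnessing precisions tend to infinity.
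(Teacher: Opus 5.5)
Your proof is correct and follows the paper's argument. The converse goes through Lemma~\ref{lem:short-names-small-measure}, and the forward direction uses Lemma~\ref{lem:hausdorff-requests}, a single application of Lemma~\ref{lem:kraft}, and the same floor rounding $r=\lfloor -\log_2\diam B\rfloor$ at an additive cost of $s$. The only difference is bookkeeping: you take one scale $m_k\ge k$ per saving level $k$ with budget $2^{-k}$, whereas the paper uses all admissible pairs $(k,m)$ with budgets $2^{-k-m-2}$ and so gets infinitely many witnessing precisions for each fixed $k$. Your version suffices because sending $k\to\infty$ drives both the saving and the witnessing precision to infinity.
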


\begin{proof}
First suppose that such a code \(\varphi\) exists.  For each \(k\in\omega\),
the assumed lower limit gives \(E\subseteq F_k^s(\varphi)\).  Hence
Lemma~\ref{lem:short-names-small-measure} gives
\[
\mathcal H^s(E)\le\mathcal H^s(F_k^s(\varphi))\le2^{-k}
\]
for every \(k\), and therefore \(\mathcal H^s(E)=0\).

Conversely, suppose that \(\mathcal H^s(E)=0\).

\smallskip
\noindent\textbf{Step 1 (requests for each pair \((k,m)\)).}
For each pair of integers
\((k,m)\) with \(k\ge0\), \(m\ge1\), and \(sm\ge k\), apply
Lemma~\ref{lem:hausdorff-requests} with
\[
\eta_{k,m}=2^{-k-m-2}.
\]
This is possible because \(\mathcal H^s_{2^{-(m+2)}}(E)=0\).  We obtain a finite
or countable index set \(I_{k,m}\), balls \((B_{k,m,i})_{i\in I_{k,m}}\), and
requested lengths \((\ell_{k,m,i})_{i\in I_{k,m}}\) such that
\[
E\subseteq\bigcup_{i\in I_{k,m}}B_{k,m,i},
\qquad
0<\diam(B_{k,m,i})<2^{-m},
\qquad
\sum_{i\in I_{k,m}}2^{-\ell_{k,m,i}}\le2^{-k-m-2},
\]
and
\[
\ell_{k,m,i}<s\bigl(-\log_2\diam(B_{k,m,i})\bigr)+1-k
\]
for every \(i\in I_{k,m}\).

\smallskip
\noindent\textbf{Step 2 (one code via Kraft).}
The set of all triples \((k,m,i)\) with \(k\ge0\), \(m\ge1\), \(sm\ge k\), and
\(i\in I_{k,m}\) is countable, and the total Kraft weight of all requests is at
most
\[
\sum_{k=0}^\infty\sum_{m=1}^\infty
\sum_{i\in I_{k,m}}2^{-\ell_{k,m,i}}
\le
\sum_{k=0}^\infty\sum_{m=1}^\infty 2^{-k-m-2}\le1.
\]
After enumerating these triples, Lemma~\ref{lem:kraft} gives pairwise
incomparable strings \((p_{k,m,i})\) with
\(|p_{k,m,i}|=\ell_{k,m,i}\).  Define a prefix-free ball code \(\varphi\) by
\[
\varphi(p_{k,m,i})=B_{k,m,i}.
\]

\smallskip
\noindent\textbf{Step 3 (rate estimate at a fixed point).}
Fix \(x\in E\) and \(L>0\).  Choose an integer \(k>s+1+L\).  For every
sufficiently large \(m\) with \(sm\ge k\), choose \(i(m)\in I_{k,m}\) such that
\(x\in B_{k,m,i(m)}\).  Put
\[
\rho_m=\diam(B_{k,m,i(m)}),
\qquad
r_m=\bigl\lfloor -\log_2\rho_m\bigr\rfloor .
\]
Then \(r_m\ge m\), so \(r_m\to\infty\), and \(\rho_m\le2^{-r_m}\).  Hence
\[
K_\varphi(x;r_m)
\le \ell_{k,m,i(m)}
< s(-\log_2\rho_m)+1-k
< sr_m+s+1-k
< sr_m-L.
\]
Since \(L>0\) was arbitrary,
\[
\liminf_{r\to\infty}\bigl(K_\varphi(x;r)-sr\bigr)=-\infty.
\]
This holds for every \(x\in E\).
\end{proof}

\begin{lemma}[From covers to a code]
\label{lem:cover-to-code}
For every nonempty \(E\subseteq\mathbb R^n\) there is a prefix-free ball code \(\varphi\)
such that \(\dim_\varphi(x)\le \dim_H(E)\) for every \(x\in E\); in particular
\(\dim_\varphi(E)\le\dim_H(E)\).
\end{lemma}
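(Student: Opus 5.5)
The plan is to follow the outline in the strategy paragraph: apply Theorem~\ref{thm:fixed-s-hausdorff-null-code} at a countable sequence of exponents decreasing to \(\dim_H(E)\), then merge the resulting codes into one prefix-free code.

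Put \(d=\dim_H(E)\le n\), and choose rationals \(q_j>d\), \(q_j>0\), with \(q_j\downarrow d\). For instance, \(q_j=d+2^{-j}\) works; it need not be rational, only positive and decreasing to \(d\). Since \(q_j>d\), we have \(\mathcal H^{q_j}(E)=0\). Indeed, for \(s>\dim_H(E)\) the measure vanishes, because \(\mathcal H^t(E)=0\) for some \(t\in(d,s)\) and \(\mathcal H^s\le\mathcal H^t\) on small scales (\(\delta^{s-t}\) factor). Theorem~\ref{thm:fixed-s-hausdorff-null-code} then gives, for each \(j\), a prefix-free ball code \(\varphi_j\) with
\[
\liminf_{r\to\infty}\bigl(K_{\varphi_j}(x;r)-q_jr\bigr)=-\infty
\quad\text{for every } x\in E .
\]
In particular \(\dim_{\varphi_j}(x)\le q_j\) for every \(x\in E\), since \(K_{\varphi_j}(x;r)<q_jr\) for infinitely many \(r\).

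Next we combine the codes. Take the pairwise incomparable strings \(c_j=1^j0\) and define
\[
\varphi(c_jp)=\varphi_j(p)\qquad\text{for } p\in\dom\varphi_j .
\]
The domain of \(\varphi\) is prefix-free. Strings with different prefixes \(c_j\ne c_{j'}\) are incomparable, and strings with the same prefix \(c_j\) are incomparable because \(\dom\varphi_j\) is prefix-free. Since the ball is unchanged, \(K_\varphi(x;r)\le K_{\varphi_j}(x;r)+j+1\). Dividing by \(r\) and taking \(\liminf\) along the infinitely many \(r\) witnessing the bound above gives
\[
\dim_\varphi(x)\le q_j .
\]
This holds for every \(j\) and every \(x\in E\). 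Letting \(j\to\infty\) gives \(\dim_\varphi(x)\le d\), and hence \(\dim_\varphi(E)\le d\).

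The only potential obstacle is the edge case \(d=0\), where Theorem~\ref{thm:fixed-s-hausdorff-null-code} requires \(s>0\). This is why the \(q_j\) are chosen strictly positive, and the argument above covers it uniformly. The heavy lifting has already been done in Theorem~\ref{thm:fixed-s-hausdorff-null-code}, so the remaining work is just checking prefix-freeness and absorbing the constant overhead \(j+1\) into the rate.
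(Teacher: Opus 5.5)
Your proposal is correct and follows essentially the same route as the paper: apply Theorem~\ref{thm:fixed-s-hausdorff-null-code} at exponents \(q_j\downarrow\dim_H(E)\), merge the resulting codes using the incomparable prefixes \(1^j0\), and absorb the constant overhead \(j+1\) into the rate. Your extra justification that \(\mathcal H^{q_j}(E)=0\), and your remark that \(q_j>0\) handles the case \(d=0\), are fine; the paper leaves both implicit, since \(q_j>d\ge0\).
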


\begin{proof}
Put \(d=\dim_H(E)\), and choose a decreasing sequence
\((q_j)_{j\in\omega}\) of rational numbers such that \(q_j>d\) for every \(j\)
and \(q_j\to d\).  For each \(j\), we have \(\mathcal H^{q_j}(E)=0\).
By Theorem~\ref{thm:fixed-s-hausdorff-null-code}, choose a prefix-free ball code
\(\varphi_j\) such that, for every \(x\in E\),
\[
\liminf_{r\to\infty}\bigl(K_{\varphi_j}(x;r)-q_jr\bigr)=-\infty.
\]
Put \(\tau_j=1^j0\).  The strings \((\tau_j)_{j\in\omega}\) are pairwise
incomparable.
Define a prefix-free ball code \(\varphi\) by
\[
\varphi(\tau_jp)=\varphi_j(p)
\]
for \(j\in\omega\) and \(p\in\dom(\varphi_j)\).
This is prefix-free because the strings \(\tau_j\) are pairwise incomparable
and each \(\dom(\varphi_j)\) is prefix-free.

Fix \(x\in E\) and \(j\in\omega\).  For all \(r\),
\[
K_\varphi(x;r)\le K_{\varphi_j}(x;r)+|\tau_j|.
\]
Therefore
\[
\liminf_{r\to\infty}\bigl(K_\varphi(x;r)-q_jr\bigr)=-\infty,
\]
and in particular \(\dim_\varphi(x)\le q_j\).  Since this holds for every
\(j\) and \(q_j\to d\), we have \(\dim_\varphi(x)\le d\).  Hence
\(\dim_\varphi(E)\le d=\dim_H(E)\).
\end{proof}

\begin{proof}[Proof of Theorem~\ref{thm:prefix-free-ball-code-hausdorff}]
Lemma~\ref{lem:code-to-cover} gives
\(\dim_H(E)\le\dim_\varphi(E)\) for every prefix-free ball code \(\varphi\),
while Lemma~\ref{lem:cover-to-code} gives a code \(\varphi^*\) with
\(\dim_{\varphi^*}(E)\le\dim_H(E)\).  Hence equality holds and the minimum is
attained by \(\varphi^*\).
\end{proof}

\begin{remark}
The prefix-free requirement can be omitted from the Hausdorff characterization.
Indeed, let a plain ball code be defined in the same way,
but with an arbitrary domain \(D\subseteq 2^{<\omega}\), and let
\(C_\varphi(x;r)\) denote the corresponding minimum description length.
Every plain code \(\varphi\) can be transformed into a prefix-free code
\(\widehat\varphi\) by replacing each name \(p\) with a prefix-free
encoding \(\widehat p\) satisfying
\[
  |\widehat p|=|p|+O(\log(|p|+2)).
\]
Consequently,
\[
  K_{\widehat\varphi}(x;r)
  \le C_\varphi(x;r)
     +O(\log(C_\varphi(x;r)+2)).
\]
This sublinear overhead does not affect the lower asymptotic rate.
Since every prefix-free ball code is also a plain ball code,
the minimum in the Hausdorff characterization is unchanged if plain ball codes
are used instead.
\end{remark}

\section{Packing Dimension via Ball Codes}
\label{sec:packing-ball-codes}

We now give the corresponding characterization of packing dimension.  At the
level of the induced pointwise rates, the lower limit in the Hausdorff case is
replaced by an upper limit.  The proof correspondingly uses the modified
box-counting characterization of packing dimension rather than Hausdorff
covers.

\begin{definition}
\label{def:packing-ball-code-dimension}
Let \(\varphi\) be a prefix-free ball code.  For \(x\in\mathbb R^n\), define
\[
\Dim_\varphi(x)
=
\limsup_{r\to\infty}
\frac{K_\varphi(x;r)}{r}.
\]
For nonempty \(E\subseteq\mathbb R^n\), put
\[
\Dim_\varphi(E)
=
\sup_{x\in E}\Dim_\varphi(x).
\]
\end{definition}

\begin{theorem}
\label{thm:packing-ball-code-dimension}
For every nonempty \(E\subseteq\mathbb R^n\),
\[
\dim_P(E)=\min_\varphi \Dim_\varphi(E)
\]
where the minimum is taken over all prefix-free ball codes \(\varphi\).
\end{theorem}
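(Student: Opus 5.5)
We prove the two inequalities separately and pass through Theorem~\ref{thm:packing-modified-box-counting}, so that we may work with \(\overline{\dim}_{\mathrm{MB}}(E)\) instead of \(\dim_P(E)\).

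\emph{Upper bound: from box counts to a code.} Put \(d=\overline{\dim}_{\mathrm{MB}}(E)\) and fix rationals \(q_j\downarrow d\). For each \(j\), choose a cover \(E\subseteq\bigcup_i E_{j,i}\) by nonempty bounded sets with \(\overline{\dim}_{\mathrm B}(E_{j,i})<q_j\). For each \(E_{j,i}\) there is \(r_{j,i}\) such that for all \(r\ge r_{j,i}\) the set \(E_{j,i}\) is covered by at most \(2^{q_jr}\) closed balls of radius \(2^{-r-1}\); these balls have diameter \(2^{-r}\). Give the \(t\)-th such ball at precision \(r\) the request length \(\ell=\lceil q_j r\rceil+2\log_2(r+2)+c_{j,i}\), where \(c_{j,i}=O(j+i)\) is chosen so that \(\sum_{j,i}2^{-c_{j,i}}\le 1/2\). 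The total Kraft weight is then at most
\[
\sum_{j,i}2^{-c_{j,i}}\sum_r 2^{q_jr}2^{-q_jr}(r+2)^{-2}\le 1 .
\]
Lemma~\ref{lem:kraft} then gives a single prefix-free code \(\varphi\). For \(x\in E\) and any \(j\), pick \(i\) with \(x\in E_{j,i}\). For all \(r\ge r_{j,i}\) we get \(K_\varphi(x;r)\le q_jr+O(\log r)+c_{j,i}\), so \(\Dim_\varphi(x)\le q_j\). Letting \(j\to\infty\) gives \(\Dim_\varphi(E)\le d\), and this also shows the minimum is attained. The key point is that a \(\limsup\) bound must hold at \emph{every} large precision. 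Uniform box-counting supplies exactly this, and the \(\log\) term handles summing over all \(r\).

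\emph{Lower bound: from a code to box counts.} Let \(\varphi\) be any prefix-free code with \(\alpha=\Dim_\varphi(E)<\infty\), and fix \(s>\alpha\). For \(N\in\omega\) let
\[
E_N=\{x\in E: K_\varphi(x;r)<sr\text{ for all }r\ge N\}.
\]
Then \(E=\bigcup_N E_N\). Intersect each \(E_N\) with the unit cubes \(Q_z\), \(z\in\mathbb Z^n\), to get bounded pieces \(E_{N,z}\), and discard the empty ones. For \(r\ge N\), the set \(E_{N,z}\) is covered by the balls \(\varphi(p)\) with \(|p|<sr\), \(\diam\varphi(p)\le2^{-r}\) and \(\varphi(p)\cap Q_z\ne\emptyset\). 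By the Kraft inequality there are fewer than \(2^{sr}\) strings of length \(<sr\) in a prefix-free set; the bound \(\sum 2^{-|p|}\le1\) gives at most \(2^{sr}\) of them. Each such ball has radius \(\le2^{-r-1}\), so it lies in a ball of radius \(2^{-r}\). Hence \(N_{2^{-r}}(E_{N,z})\le 2^{sr}\) for all \(r\ge N\), which gives \(\overline{\dim}_{\mathrm B}(E_{N,z})\le s\). Intersecting with cubes is not even needed, since the count bound does not use boundedness; boundedness is required only by Definition~\ref{def:box-counting-dimensions}. Therefore \(\overline{\dim}_{\mathrm{MB}}(E)\le s\), and letting \(s\downarrow\alpha\) and applying Theorem~\ref{thm:packing-modified-box-counting} gives \(\dim_P(E)\le\Dim_\varphi(E)\).

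\emph{Where the difficulty lies.} The main care point is the upper bound. We must check that the counts \(N_\delta\) at the dyadic scales \(\delta=2^{-r-1}\) control the \(\limsup\) over all \(\delta\), which follows from monotonicity of \(N_\delta\) up to constant factors. We must also check that the countably many pieces \(E_{j,i}\) are indexed with only constant overhead per point, so that the rate is unaffected. The lower bound is essentially Kraft counting.
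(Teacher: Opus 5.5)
Your proposal is correct and follows essentially the same route as the paper. For the upper bound, you build a single Kraft-realized code from uniform dyadic box covers of the pieces of an $\overline{\dim}_{\mathrm{MB}}$-decomposition, paying a $2\log_2 r$ overhead per scale; for the lower bound, you use the decomposition $E_N=\{x\in E: K_\varphi(x;r)<sr \text{ for all } r\ge N\}$ and count short names at each scale, with both directions passing through Theorem~\ref{thm:packing-modified-box-counting}. The differences are cosmetic: you bound the number of short names by Kraft's inequality, whereas the paper counts all strings of length at most $tr$, so its lower bound does not use prefix-freeness; and you correctly note that each $E_N$ is automatically bounded, which makes intersecting with cubes (the paper uses $\overline B(0,M)$) unnecessary.
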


\medskip
\noindent\textit{Strategy of the proof.}
The proof runs parallel to that of
Theorem~\ref{thm:prefix-free-ball-code-hausdorff}, with Hausdorff measure
replaced by the \emph{modified box-counting} characterization of packing
dimension (Theorem~\ref{thm:packing-modified-box-counting}):
\[
\dim_P(E)=\inf\Bigl\{\ \sup_j \overline{\dim}_{\mathrm B}(E_j)\ :\
E\subseteq\bigcup_{j}E_j,\ \text{each }E_j\text{ nonempty and bounded}\Bigr\}.
\]
\begin{itemize}
\item \emph{From a code to a decomposition} (Lemma~\ref{lem:packing-code-to-box}).
  If \(\Dim_\varphi(E)<t\), then every \(x\in E\) satisfies \(K_\varphi(x;r)\le tr\)
  for \emph{all} large \(r\) (this is where \(\limsup\) replaces \(\liminf\)).
  Splitting \(E\) according to the threshold from which this holds, and
  intersecting with balls to get boundedness, produces countably many pieces
  \(E_{M,k}\); on each of them the strings of length \(\le tr\) give a cover by at
  most \(2^{tr+1}\) sets of diameter \(\le 2^{-r}\), so
  \(\overline{\dim}_{\mathrm B}(E_{M,k})\le t\).
\item \emph{From upper box-counting dimension to requests}
  (Lemma~\ref{lem:upper-box-to-requests}).  If
  \(\overline{\dim}_{\mathrm B}(F)<q\), then at every sufficiently large scale
  \(2^{-r}\) the set \(F\) has a cover whose balls may be requested with lengths
  \(qr+2\log_2(r+1)+O(1)\), and the total Kraft weight of all these requests can
  be made arbitrarily small.  The logarithmic overhead makes the sum over all scales
  converge without introducing a second exponent.
\item \emph{From packing decompositions to a code}
  (Lemma~\ref{lem:packing-box-to-code}).
  For a rational sequence \(q_j\downarrow\dim_P(E)\),
  decompose \(E\) into bounded pieces of upper box-counting dimension
  \(<q_j\), apply the preceding upper-box lemma to each piece with summable Kraft budgets,
  and then use Lemma~\ref{lem:kraft} once to realize all requests in a
  single prefix-free code.
  This code satisfies \(\Dim_\varphi(x)\le q_j\) for
  every \(x\in E\) and every \(j\).
\end{itemize}

We use the covering number \(N_\delta(F)\) from
Definition~\ref{def:box-counting-dimensions}.  Equivalently, one may use covers
by sets of diameter at most \(\delta\), since this only changes the scale by a
fixed multiplicative factor and hence does not change the upper box-counting dimension.
We use the extended Kraft inequality (Lemma~\ref{lem:kraft}) again.

\begin{lemma}[From a code to a decomposition]
\label{lem:packing-code-to-box}
For every nonempty \(E\subseteq\mathbb R^n\) and every prefix-free ball code
\(\varphi\),
\[
\dim_P(E)\le \Dim_\varphi(E).
\]
\end{lemma}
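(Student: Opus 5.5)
The plan is to use Theorem~\ref{thm:packing-modified-box-counting} and prove \(\overline{\dim}_{\mathrm{MB}}(E)\le t\) for every real \(t>\Dim_\varphi(E)\). If \(\Dim_\varphi(E)=\infty\) there is nothing to prove, so fix such a \(t\). By definition of the upper limit, every \(x\in E\) admits some \(M\in\omega\) with
\[
K_\varphi(x;r)\le tr\qquad\text{for all } r\ge M .
\]
For \(M\in\omega\) let \(E_M\) be the set of \(x\in E\) for which this threshold works. Then \(E=\bigcup_M E_M\).

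To obtain bounded pieces, fix a countable cover of \(\mathbb R^n\) by closed unit balls \(C_k\) (centers in \(\mathbb Z^n\), say). Put \(E_{M,k}=E_M\cap C_k\) and discard the empty ones. The remaining pieces form a countable family of nonempty bounded sets covering \(E\); since \(E\) is nonempty, the family is nonempty.

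It therefore suffices to show \(\overline{\dim}_{\mathrm B}(E_{M,k})\le t\) for each piece. Fix \(r\ge M\). Each \(x\in E_{M,k}\) lies in some \(\varphi(p)\) with \(|p|\le tr\) and \(\diam\varphi(p)\le 2^{-r}\). So the balls \(\varphi(p)\), with \(p\in\dom\varphi\), \(|p|\le tr\) and \(\diam\varphi(p)\le2^{-r}\), cover \(E_{M,k}\). There are at most \(\sum_{\ell\le tr}2^\ell<2^{tr+1}\) such strings; prefix-freeness is not even needed here.

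A set of diameter at most \(2^{-r}\), if it meets \(F\), lies in a closed ball of radius \(2^{-r}\) centered at any of its points. Hence \(N_{2^{-r}}(E_{M,k})\le 2^{tr+1}\), and
\[
\frac{\log_2 N_{2^{-r}}(E_{M,k})}{r}\le t+\frac1r .
\]

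The only point needing some care, and the main (mild) obstacle, is passing from dyadic scales \(\delta=2^{-r}\) to arbitrary \(\delta\to0\) in the upper limit. For \(2^{-r-1}<\delta\le 2^{-r}\) we have \(N_\delta\le N_{2^{-r-1}}\) and \(-\log_2\delta\ge r\). This gives
\[
\frac{\log_2N_\delta}{-\log_2\delta}\le\frac{t(r+1)+1}{r}\to t ,
\]
so \(\overline{\dim}_{\mathrm B}(E_{M,k})\le t\).

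Taking the supremum over the pieces yields \(\overline{\dim}_{\mathrm{MB}}(E)\le t\). Hence \(\dim_P(E)\le t\) by Theorem~\ref{thm:packing-modified-box-counting}, and letting \(t\downarrow\Dim_\varphi(E)\) completes the argument.
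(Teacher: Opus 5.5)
Your proposal is correct and follows the paper's proof essentially step by step. It uses the same threshold decomposition \(E_M\) cut into bounded pieces, the same count of at most \(2^{tr+1}\) strings of length \(\le tr\) giving \(N_{2^{-r}}(E_{M,k})\le 2^{tr+1}\) for \(r\ge M\), and the same conclusion via Theorem~\ref{thm:packing-modified-box-counting}. The only differences are cosmetic: you intersect with unit balls centered at points of \(\mathbb Z^n\) rather than with \(\overline B(0,M)\), and you check the passage from dyadic scales to arbitrary \(\delta\to0\) explicitly where the paper cites Falconer.
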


\begin{proof}
Put \(\alpha=\Dim_\varphi(E)\); we may assume \(\alpha<\infty\).  Fix \(t>\alpha\);
it suffices to prove \(\dim_P(E)\le t\).

\smallskip
\noindent\textbf{Step 1 (decomposition of \(E\)).}
For \(k\in\omega\) put
\[
\begin{aligned}
E_k
&=
\bigl\{\,x\in E:
K_\varphi(x;r)\le tr
\text{ for every }r\ge k\,\bigr\},\\
E_{M,k}
&=
E_k\cap\overline B(0,M)
\qquad(M\in\omega).
\end{aligned}
\]
For every \(x\in E\) we have
\(\limsup_{r\to\infty}K_\varphi(x;r)/r=\Dim_\varphi(x)\le\alpha<t\), so
\(K_\varphi(x;r)\le tr\) holds for all sufficiently large \(r\), i.e.\ \(x\in E_k\)
for some \(k\).  Hence
\begin{equation}
\label{eq:packing-code-decomposition}
E=\bigcup_{M,k\in\omega}E_{M,k}.
\end{equation}
The nonempty members of this family form a countable decomposition of \(E\) into
nonempty bounded sets, as required in
Definition~\ref{def:box-counting-dimensions}.

\smallskip
\noindent\textbf{Step 2 (box-counting on each piece).}
Fix \(M,k\) with \(E_{M,k}\ne\emptyset\), and let \(r\ge k\).  Put
\[
D_r=\bigl\{\,p\in\dom(\varphi)\;:\;|p|\le tr,\ \diam(\varphi(p))\le 2^{-r}\,\bigr\}.
\]
If \(x\in E_{M,k}\) then \(K_\varphi(x;r)\le tr\), so some \(p\in D_r\) satisfies
\(x\in\varphi(p)\).  Thus \(\{\varphi(p):p\in D_r\}\) covers \(E_{M,k}\) by balls of
diameter \(\le 2^{-r}\).  Enlarging each such ball to radius \(2^{-r}\), if
necessary, gives a cover by closed balls of the radius used in the definition of
\(N_{2^{-r}}\).  Since the number of binary strings of length \(\le tr\) is at
most \(2^{tr+1}\),
\[
N_{2^{-r}}(E_{M,k})\le \# D_r\le 2^{tr+1}
\qquad(r\ge k).
\]
By the standard restriction to dyadic scales in the definition of upper
box-counting dimension~\cite[Subsection~2.1]{Falconer2014},
\[
\overline{\dim}_{\mathrm B}(E_{M,k})
=\limsup_{r\to\infty}\frac{\log_2 N_{2^{-r}}(E_{M,k})}{r}
\le\limsup_{r\to\infty}\frac{tr+1}{r}=t .
\]

\smallskip
\noindent\textbf{Step 3 (conclusion).}
Since \(E\ne\emptyset\), the set of pairs \((M,k)\) with
\(E_{M,k}\ne\emptyset\) is nonempty.  Enumerating these nonempty pieces and using
Definition~\ref{def:box-counting-dimensions} together with
Theorem~\ref{thm:packing-modified-box-counting},
\[
\dim_P(E)
=\overline{\dim}_{\mathrm{MB}}(E)
\le\sup_{M,k:\, E_{M,k}\ne\emptyset}
\overline{\dim}_{\mathrm B}(E_{M,k})
\le t .
\]
As \(t>\alpha\) was arbitrary, \(\dim_P(E)\le\alpha=\Dim_\varphi(E)\).
\end{proof}

\begin{lemma}[Upper box-counting dimension gives all-scale requests]
\label{lem:upper-box-to-requests}
Let \(F\subseteq\mathbb R^n\) be nonempty and bounded, and let \(q>0\) satisfy
\[
\overline{\dim}_{\mathrm B}(F)<q.
\]
For every \(\eta>0\), there are \(r_0\in\omega\), closed balls
\(B_{r,i}\ (r\ge r_0, i<m_r)\), and natural numbers
\(\ell_{r,i}\ (r\ge r_0, i<m_r)\) such that, for every \(r\ge r_0\),
\[
F\subseteq\bigcup_{i<m_r}B_{r,i},
\qquad
\diam(B_{r,i})\le2^{-r},
\]
and, in total,
\[
\sum_{r\ge r_0}\sum_{i<m_r}2^{-\ell_{r,i}}<\eta,
\]
and
\[
\ell_{r,i}\le qr+2\log_2(r+1)+O(1)
\]
where the constant in \(O(1)\) may depend on \(F,q,\eta\), but not on
\(r\) or \(i\).  Thus a bounded set of upper box-counting dimension \(<q\) admits
descriptions at \emph{every sufficiently large scale} with rate \(q\), up to a
logarithmic overhead, and with arbitrarily small total Kraft weight.
\end{lemma}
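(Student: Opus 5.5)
Since \(\overline{\dim}_{\mathrm B}(F)<q\), the plan is to take the natural box-counting cover at each dyadic scale and assign each ball the same length. That length will be \(qr\) plus a logarithmic term, plus a constant \(c\) chosen so that the total Kraft weight falls below \(\eta\).

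\textbf{Covering number bound.} Fix \(q'\) with \(\overline{\dim}_{\mathrm B}(F)<q'<q\); actually \(q'=q\) suffices, because the inequality is strict. By the definition of \(\overline{\dim}_{\mathrm B}\), restricted to dyadic scales as in Step~2 of Lemma~\ref{lem:packing-code-to-box}, there is \(r_1\) such that
\[
N_{2^{-(r+1)}}(F)\le 2^{qr}\qquad\text{for all }r\ge r_1.
\]
Here we use radius \(2^{-(r+1)}\), i.e.\ diameter \(2^{-r}\), and shift the index by one, which costs only a constant factor \(2^{q}\) absorbed by taking \(r_1\) large. For each \(r\ge r_1\), let \(B_{r,i}\) (\(i<m_r\)) be a cover of \(F\) by \(m_r=N_{2^{-(r+1)}}(F)\) closed balls of radius \(2^{-(r+1)}\). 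Then \(\diam(B_{r,i})\le2^{-r}\), and the balls have positive radius, as a ball code requires. Boundedness of \(F\) guarantees that \(m_r\) is finite.

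\textbf{Choice of lengths and Kraft sum.} Set
\[
\ell_{r,i}=\lceil qr\rceil+\lceil 2\log_2(r+1)\rceil+c,
\]
where \(c\in\omega\) is chosen below. Then
\[
\sum_{i<m_r}2^{-\ell_{r,i}}\le m_r\,2^{-qr}(r+1)^{-2}2^{-c}\le (r+1)^{-2}2^{-c},
\]
and summing over \(r\ge r_0:=r_1\) gives a total of at most \(2^{-c}\pi^2/6\). Choosing \(c\) with \(2^{-c}\pi^2/6<\eta\) gives the Kraft bound. The upper bound \(\ell_{r,i}\le qr+2\log_2(r+1)+c+2\) then holds with an additive constant depending only on \(\eta\) (and on \(F,q\) through \(r_0\)).

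\textbf{Main obstacle.} The only real point is getting the uniform bound \(m_r\le 2^{qr}\) at \emph{every} large scale, not merely along a subsequence. This is exactly what the \(\limsup\) in \(\overline{\dim}_{\mathrm B}\) provides, since the limsup being below \(q\) means the ratio is eventually below \(q\) for all large \(r\). It must be combined with the harmless passage between radius and diameter conventions via a one-step index shift. The \((r+1)^{-2}\) factor is what makes the sum over infinitely many scales converge without spending any slack in the exponent \(q\).
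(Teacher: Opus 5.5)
Your proposal is correct and follows essentially the same route as the paper: at each scale you cover $F$ by closed balls of radius $2^{-(r+1)}$, assign every ball the same length $\lceil qr\rceil$ plus a doubled logarithmic term plus a constant $c$, and sum the resulting $(r+1)^{-2}$ series. The only difference is cosmetic: you use the strict inequality to get $m_r\le 2^{qr}$ for all large $r$, since the $\limsup$ of $\log_2 N_{2^{-(r+1)}}(F)/r$ is still below $q$, whereas the paper keeps $m_r\le 2^{q(r+1)}$ and absorbs the factor $2^{q}$ into the choice of $c$.
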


\begin{proof}
By the definition of upper box-counting dimension
(Definition~\ref{def:box-counting-dimensions}), there is \(r_0\in\omega\)
such that, for every \(r\ge r_0\), the set \(F\) is covered by
\(m_r\) closed balls of radius \(2^{-(r+1)}\), where
\[
m_r\le2^{q(r+1)}.
\]
Denote these balls by \(B_{r,i}\ (i<m_r)\).  Each has diameter at most
\(2^{-r}\).

Choose \(c\in\omega\) so large that
\[
2^{q-c}\sum_{r\ge r_0}(r+1)^{-2}<\eta.
\]
Set
\[
\ell_{r,i}=\lceil qr\rceil+2\lceil\log_2(r+1)\rceil+c.
\]
Then
\[
\begin{aligned}
\sum_{r\ge r_0}\sum_{i<m_r}2^{-\ell_{r,i}}
&\le
\sum_{r\ge r_0}
2^{q(r+1)}2^{-qr}(r+1)^{-2}2^{-c} \\
&\le
2^{q-c}\sum_{r\ge r_0}(r+1)^{-2}
<\eta,
\end{aligned}
\]
and \(\ell_{r,i}\le qr+2\log_2(r+1)+O(1)\), as required.
\end{proof}

\begin{lemma}[From packing decompositions to a code]
\label{lem:packing-box-to-code}
For every nonempty \(E\subseteq\mathbb R^n\) there is a prefix-free ball code \(\varphi\)
such that \(\Dim_\varphi(x)\le\dim_P(E)\) for every \(x\in E\); in particular
\(\Dim_\varphi(E)\le\dim_P(E)\).
\end{lemma}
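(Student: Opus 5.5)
We follow the strategy announced before Lemma~\ref{lem:packing-code-to-box}, mirroring the proof of Lemma~\ref{lem:cover-to-code} with Lemma~\ref{lem:upper-box-to-requests} in place of Lemma~\ref{lem:hausdorff-requests}. Put \(d=\dim_P(E)\) and choose rationals \(q_j\downarrow d\) with \(q_j>d\) and \(q_j>0\). By Theorem~\ref{thm:packing-modified-box-counting}, \(\overline{\dim}_{\mathrm{MB}}(E)=d<q_j\). Hence for each \(j\) there is a cover \(E\subseteq\bigcup_{k\in\omega}E_{j,k}\) by nonempty bounded sets with \(\sup_k\overline{\dim}_{\mathrm B}(E_{j,k})<q_j\). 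In particular \(\overline{\dim}_{\mathrm B}(E_{j,k})<q_j\) for every \(k\).

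For each pair \((j,k)\) we apply Lemma~\ref{lem:upper-box-to-requests} to \(F=E_{j,k}\), \(q=q_j\) and budget \(\eta_{j,k}=2^{-j-k-2}\). This yields \(r_0(j,k)\), balls \(B_{j,k,r,i}\) with \(\diam\le 2^{-r}\) covering \(E_{j,k}\) at every scale \(r\ge r_0(j,k)\), and lengths
\[
\ell_{j,k,r,i}\le q_jr+2\log_2(r+1)+c_{j,k}.
\]
The total weight over all quadruples is at most \(\sum_{j,k}2^{-j-k-2}\le 1\). Enumerating the countably many quadruples, Lemma~\ref{lem:kraft} gives pairwise incomparable strings \(p_{j,k,r,i}\) of exactly these lengths. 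We then set \(\varphi(p_{j,k,r,i})=B_{j,k,r,i}\). The balls have positive radius, since they were taken of radius \(2^{-(r+1)}\) in the proof of that lemma. So \(\varphi\) is a prefix-free ball code, and we realize all requests with a single application of Kraft. Alternatively, one could build a separate code per \((j,k)\) and combine them by prefixing with a prefix-free encoding of \((j,k)\), as in Lemma~\ref{lem:cover-to-code}. Either way only a constant depending on \((j,k)\) is lost.

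For the rate estimate, fix \(x\in E\) and \(j\), and pick \(k\) with \(x\in E_{j,k}\). For every \(r\ge r_0(j,k)\), some \(B_{j,k,r,i}\ni x\) has diameter \(\le2^{-r}\). Therefore
\[
K_\varphi(x;r)\le q_jr+2\log_2(r+1)+c_{j,k}
\]
for all large \(r\), and dividing by \(r\) gives \(\Dim_\varphi(x)\le q_j\). Letting \(j\to\infty\) gives \(\Dim_\varphi(x)\le d\), and hence \(\Dim_\varphi(E)\le d\).

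The main point requiring care is that the bound must hold at \emph{every} large \(r\), not just infinitely many. This is why the per-scale covers with logarithmic overhead are essential. The constant \(c_{j,k}\) may depend on \((j,k)\), which is harmless because \(x\) fixes one pair for each \(j\). One must also check that the Kraft budgets \(\eta_{j,k}\) are fixed before the constants \(c_{j,k}\) are chosen, which the statement of Lemma~\ref{lem:upper-box-to-requests} permits.
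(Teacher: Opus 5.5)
Your proposal is correct and follows essentially the same route as the paper's proof: the same decomposition from Theorem~\ref{thm:packing-modified-box-counting}, the same budgets \(2^{-j-k-2}\) fed into Lemma~\ref{lem:upper-box-to-requests}, a single application of Lemma~\ref{lem:kraft}, and the same all-scales rate estimate. Your extra checks are correct details the paper leaves implicit: that \(q_j>0\), that the radii are positive, and that each \(\eta_{j,k}\) is fixed before \(c_{j,k}\).
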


\begin{proof}
Put \(d=\dim_P(E)\), and choose a decreasing sequence
\((q_j)_{j\in\omega}\) of rational numbers such that \(q_j>d\) for every \(j\)
and \(q_j\to d\).  For each \(j\),
Theorem~\ref{thm:packing-modified-box-counting} gives
\(\overline{\dim}_{\mathrm{MB}}(E)<q_j\).  Since \(E\ne\emptyset\), the definition
of upper modified box-counting dimension yields nonempty bounded sets
\((E_{j,k})_{k\in\omega}\) such that
\[
E\subseteq\bigcup_kE_{j,k},
\qquad
\sup_k\overline{\dim}_{\mathrm B}(E_{j,k})<q_j.
\]
Apply Lemma~\ref{lem:upper-box-to-requests} to each \(E_{j,k}\), with
\[
\eta=2^{-j-k-2}.
\]
This gives, for each pair \((j,k)\), scales \(r_{j,k}\), balls \(B_{j,k,r,i}\),
and requested lengths \(\ell_{j,k,r,i}\) such that the balls cover \(E_{j,k}\) at
every \(r\ge r_{j,k}\), have diameter at most \(2^{-r}\), satisfy
\[
\sum_{r\ge r_{j,k}}\sum_i2^{-\ell_{j,k,r,i}}
<2^{-j-k-2},
\]
and obey the following bound for some constant \(c_{j,k}\):
\[
\ell_{j,k,r,i}\le q_jr+2\log_2(r+1)+c_{j,k}.
\]
Consequently,
\[
\sum_j\sum_k\sum_{r\ge r_{j,k}}\sum_i
2^{-\ell_{j,k,r,i}}
<
\sum_j\sum_k2^{-j-k-2}
=1.
\]
By Lemma~\ref{lem:kraft}, there are pairwise incomparable strings
\((p_{j,k,r,i})\) with
\[
|p_{j,k,r,i}|=\ell_{j,k,r,i}.
\]
Define a prefix-free ball code \(\varphi\) by
\[
\varphi(p_{j,k,r,i})=B_{j,k,r,i}.
\]

Fix \(x\in E\) and \(j\in\omega\).  Choose \(k\) with \(x\in E_{j,k}\).  For every
\(r\ge r_{j,k}\), some ball \(B_{j,k,r,i(r)}\) contains \(x\), has diameter at
most \(2^{-r}\), and is coded by \(\varphi\).  Hence, for every \(r\ge r_{j,k}\),
\[
K_\varphi(x;r)
\le
\ell_{j,k,r,i(r)}
\le q_jr+2\log_2(r+1)+c_{j,k}.
\]
The logarithmic and constant terms vanish after division by \(r\), so
\(\Dim_\varphi(x)\le q_j\).  Since this holds for every \(j\) and \(q_j\to d\),
we have \(\Dim_\varphi(x)\le d\).  Hence
\(\Dim_\varphi(E)\le d=\dim_P(E)\).
\end{proof}

\begin{proof}[Proof of Theorem~\ref{thm:packing-ball-code-dimension}]
Lemma~\ref{lem:packing-code-to-box} gives
\(\dim_P(E)\le\Dim_\varphi(E)\) for every prefix-free ball code \(\varphi\), while
Lemma~\ref{lem:packing-box-to-code} gives a prefix-free ball code \(\varphi^*\)
with \(\Dim_{\varphi^*}(E)\le\dim_P(E)\).  Hence equality holds, and
\(\varphi^*\) attains the minimum.
\end{proof}

\begin{remark}
As in the Hausdorff characterization, the prefix-free requirement can be
omitted from the packing characterization without changing the minimum.
\end{remark}

\section{The Point-to-Set Principles}
\label{sec:point-to-set-principles}

\subsection{Rational closed ball complexity}

In order to effectivize closed-ball codes,
we restrict attention to rational closed balls,
which form an effectively enumerable countable class.

\begin{definition}
\label{def:rational-closed-ball}
A rational closed ball is a closed ball
\[
\overline B(q,\rho)\subseteq \mathbb R^n
\]
with \(q\in\mathbb Q^n\) and \(\rho\in\mathbb Q_{>0}\).
\end{definition}

For the asymptotic minimization problems considered below, restricting the
range to rational closed balls is harmless: every prefix-free ball code can
be replaced by a rational one with only a one-step loss in precision.

\begin{lemma}
\label{lem:rationalize-prefix-free-ball-code}
For every prefix-free ball code \(\varphi\), there is a prefix-free ball code
\(\varphi'\) whose values are rational closed balls such that
\[
K_{\varphi'}(x;r)
\le
K_\varphi(x;r+1)
\]
for all \(x\in\mathbb R^n\) and \(r\in\omega\).  In particular, in the
prefix-free ball-code characterizations of Hausdorff and packing dimensions in
Theorems~\ref{thm:prefix-free-ball-code-hausdorff}
and~\ref{thm:packing-ball-code-dimension}, it is enough to consider ball codes
whose ranges consist of rational closed balls.
\end{lemma}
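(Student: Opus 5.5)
The plan is to keep the domain of \(\varphi\) and replace each ball by a slightly larger rational ball. So we set \(\dom\varphi'=\dom\varphi\), which keeps \(\varphi'\) prefix-free, and keep name lengths unchanged. For each \(p\in\dom\varphi\), write \(\varphi(p)=\overline B(a,\rho)\) with \(\rho>0\), so \(\diam\varphi(p)=2\rho\). Choose a rational radius \(\rho'\) with \(\rho<\rho'\le 2\rho\), and a rational center \(q\in\mathbb Q^n\) with \(|q-a|\le\rho'-\rho\). Both choices are possible because \(\rho'-\rho>0\) and \(\mathbb Q\), \(\mathbb Q^n\) are dense. Put \(\varphi'(p)=\overline B(q,\rho')\). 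This is a rational closed ball of positive radius.

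Two properties of \(\varphi'(p)\) need checking. First, \(\varphi'(p)\supseteq\varphi(p)\): if \(|y-a|\le\rho\), then \(|y-q|\le\rho+(\rho'-\rho)=\rho'\). Second, \(\diam\varphi'(p)\le 2\rho'\le 4\rho=2\diam\varphi(p)\). For closed balls in \(\mathbb R^n\) the diameter is exactly twice the radius, but the inequality is all that is needed.

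Now fix \(x\) and \(r\). If \(K_\varphi(x;r+1)=\infty\) there is nothing to prove. Otherwise take \(p\) attaining it, so \(x\in\varphi(p)\) and \(\diam\varphi(p)\le 2^{-(r+1)}\). Then \(x\in\varphi'(p)\) and \(\diam\varphi'(p)\le 2\cdot2^{-(r+1)}=2^{-r}\). Hence \(K_{\varphi'}(x;r)\le|p|=K_\varphi(x;r+1)\).

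For the ``in particular'' clause, the shift by one precision level disappears in the rates:
\[
\frac{K_{\varphi'}(x;r)}{r}\le\frac{K_\varphi(x;r+1)}{r+1}\cdot\frac{r+1}{r}.
\]
Taking \(\liminf\) and \(\limsup\) gives \(\dim_{\varphi'}(x)\le\dim_\varphi(x)\) and \(\Dim_{\varphi'}(x)\le\Dim_\varphi(x)\); the case where the values are \(\infty\) is trivial. So applying the construction to a minimizing code from Theorem~\ref{thm:prefix-free-ball-code-hausdorff} or Theorem~\ref{thm:packing-ball-code-dimension} yields a rational code that is still minimizing, since the lower bounds in Lemmas~\ref{lem:code-to-cover} and~\ref{lem:packing-code-to-box} hold for every code.

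There is no real obstacle here. The only care needed is to choose the rational enlargement so that it both contains the original ball and at most doubles its diameter, which the choice \(\rho<\rho'\le2\rho\) with the center within \(\rho'-\rho\) guarantees.
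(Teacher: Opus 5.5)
Your proof is correct and follows the paper's argument: keep the same domain, enlarge each \(\varphi(p)\) to a rational closed ball containing it with at most twice the diameter, reuse the witnessing name, and remove the one-step shift in the rates using \((r+1)/r\to1\). Your explicit choice \(\rho<\rho'\le2\rho\), with the center within \(\rho'-\rho\) of \(a\), supplies the construction that the paper obtains simply from density of the rationals.
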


\begin{proof}
For each \(p\in\dom(\varphi)\), choose a rational closed ball \(B_p\) such that
\(\varphi(p)\subseteq B_p\) and
\(\diam(B_p)<2\diam(\varphi(p))\), and define \(\varphi'(p)=B_p\) on the same
domain.  If \(p\) witnesses \(K_\varphi(x;r+1)\), then the same string witnesses
\(K_{\varphi'}(x;r)\le K_\varphi(x;r+1)\).
Taking lower or upper asymptotic rates removes the one-step shift, since
\((r+1)/r\to1\).
\end{proof}

\subsection{Point-to-set principles via rational closed ball complexity}

We now pass from arbitrary prefix-free ball codes to effective ones.
An oracle can encode an arbitrary prefix-free rational ball code.

Fix an optimal oracle prefix-free Turing machine \(U\) whose outputs are
rational closed balls.
Such a machine exists by the usual universal-machine
construction for prefix-free machines; see Downey and
Hirschfeldt~\cite[Proposition~3.5.1]{DowneyHirschfeldt2010}.
For an oracle prefix-free Turing machine \(M\), write
\[
K_M^A(b)
=
\min\{|p|:M^A(p)=b\}
\]
for the complexity of a rational closed ball \(b\), with the convention
that the minimum is \(\infty\) if no such \(p\) exists.  Here optimal means that
for every oracle prefix-free Turing machine \(M\) whose outputs are rational
closed balls, there is a constant \(c_M\) such that
\[
K_U^A(b)\le K_M^A(b)+c_M
\]
for every oracle \(A\subseteq\omega\) and every rational closed ball \(b\).
For an oracle \(A\), the machine \(U^A\) is an oracle-computable prefix-free ball
code whose values are rational closed balls.

\begin{definition}
\label{def:effective-ball-complexity}
For an oracle \(A\subseteq\omega\), \(x\in\mathbb R^n\), and \(r\in\omega\), define
\[
K^A(x;r)
=
\min\left\{
|p|:
p\in\dom(U^A),
x\in U^A(p),\
\diam(U^A(p))\le 2^{-r}
\right\}.
\]
\end{definition}

The set in this minimum is nonempty for all \(A\), \(x\), and \(r\), by
comparison with a prefix-free machine whose range contains every rational
closed ball.

Subsection~\ref{subsec:rational-point-complexity} compares this rational
closed-ball complexity with the standard rational point complexity
\(K^A_r(x)\).

\begin{lemma}
\label{lem:oracle-simulates-rational-ball-code}
For every prefix-free ball code \(\psi\) whose values are rational closed balls,
there is an oracle \(A\subseteq\omega\) such that
\[
K^A(x;r)
\le
K_\psi(x;r)+O(1)
\]
for all \(x\in\mathbb R^n\) and \(r\in\omega\), where the implicit
constant may depend on \(\psi\) but is independent of \(x\) and \(r\).
\end{lemma}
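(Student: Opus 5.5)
The plan is to encode the whole codebook \(\psi\) into an oracle and let a fixed oracle prefix-free machine read it off; optimality of \(U\) then transfers the bound with an additive constant. Fix an effective enumeration \((b_e)_{e\in\omega}\) of all rational closed balls (a computable bijection between \(\omega\) and the triples \((q,\rho)\in\mathbb Q^n\times\mathbb Q_{>0}\)), and a computable bijection \(\langle\cdot,\cdot\rangle\) between \(2^{<\omega}\times\omega\) and \(\omega\). Define the oracle
\[
A=\bigl\{\langle p,e\rangle : p\in\dom(\psi),\ \psi(p)=b_e\bigr\}.
\]
Since \(\psi\) is a function, for each \(p\) there is at most one \(e\) with \(\langle p,e\rangle\in A\).

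Next, define an oracle machine \(M\) that, on oracle \(X\) and input \(p\), searches \(e=0,1,2,\dots\) and halts with output \(b_e\) at the first \(e\) with \(\langle p,e\rangle\in X\), diverging if there is none. The difficulty to check is that \(M\) must be prefix-free for \emph{every} oracle, not only for \(A\). For an arbitrary \(X\), the set \(\{p: \exists e\ \langle p,e\rangle\in X\}\) need not be prefix-free. To fix this, I would have \(M^X\) read its input bit by bit from a self-delimiting tape. After reading each prefix \(\sigma\) of the input, it checks whether \(\sigma\) is declared a name by \(X\), and it halts as soon as one is found. Because this check is a search over \(e\) and may not terminate, I would dovetail over all prefixes of the current input at once.

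A simpler route avoids dovetailing altogether. Put a third component into the oracle, \(\langle p,0\rangle\)-style markers for the finite information ``\(p\in\dom(\psi)\)'', and let \(M^X\) read input bits one at a time, querying \(X\) at each prefix. It halts once the prefix read so far is marked, then searches for the ball. Every query is decidable given \(X\), so \(M^X\) halts on \(\sigma\) only if \(\sigma\) is the first marked prefix of the input it reads. The domain of \(M^X\) is therefore prefix-free for every \(X\), which is the standard device for making machines prefix-free relative to arbitrary oracles.

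For \(X=A\), we get \(M^A(p)=\psi(p)\) for every \(p\in\dom(\psi)\), because prefix-freeness of \(\dom(\psi)\) ensures no proper prefix of \(p\) is marked.

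Finally, apply optimality of \(U\) with oracle \(A\). For every \(p\in\dom(\psi)\), \(K_U^A(\psi(p))\le |p|+c_M\). Given \(x,r\) with \(K_\psi(x;r)<\infty\), take a witness \(p\) with \(x\in\psi(p)\), \(\diam\psi(p)\le2^{-r}\), and \(|p|=K_\psi(x;r)\). Some \(p'\) with \(U^A(p')=\psi(p)\) and \(|p'|\le|p|+c_M\) then witnesses \(K^A(x;r)\le K_\psi(x;r)+c_M\). When \(K_\psi(x;r)=\infty\), the inequality is trivial. The constant \(c_M\) depends only on \(M\), which is independent of \(\psi\), so the bound is uniform in \(x\) and \(r\).

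The main obstacle is the prefix-freeness of \(M^X\) for all oracles. The markers-and-sequential-reading construction addresses it, and the rest is bookkeeping.
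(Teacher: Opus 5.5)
Your final construction is correct and is essentially the paper's proof. The oracle records both \(\dom(\psi)\) and the graph of \(\psi\); the fixed machine accepts an input only if it is marked and no proper prefix is marked, which makes its domain prefix-free for every oracle, then searches for the unique marked output ball; and optimality of \(U\) supplies the additive constant. One small fix: give the domain markers a separate tag, as in the paper's \(\langle 0,m\rangle\) versus \(\langle 1,\langle m,k\rangle\rangle\), since a literal marker \(\langle p,0\rangle\) would collide with the graph entry that codes \(\psi(p)=b_0\).
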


\begin{proof}
Fix computable numberings \((\sigma_m)_{m\in\omega}\) of the finite binary
strings and \((b_k)_{k\in\omega}\) of the rational closed balls, and a computable
pairing function \(\langle\cdot,\cdot\rangle\).  Let \(A\) encode both the domain
and the graph of \(\psi\) by
\[
A=
\{\langle 0,m\rangle:\sigma_m\in\dom(\psi)\}
\cup
\{\langle 1,\langle m,k\rangle\rangle:
\sigma_m\in\dom(\psi)\text{ and }\psi(\sigma_m)=b_k\}.
\]
An oracle machine, on input \(p=\sigma_m\), first checks that \(p\) is marked as
a domain element and that no proper prefix of \(p\) is so marked, and then
searches for the least \(k\) marked as its output.  Its domain is prefix-free
for every oracle, while for this particular \(A\) the marked \(k\) is unique and
the machine computes exactly \(\psi\).
The optimality of \(U\) now gives the stated inequality.
\end{proof}

\begin{theorem}[Point-to-set principles, coding form]
\label{thm:point-to-set-coding}
For every nonempty \(E\subseteq\mathbb R^n\),
\[
\dim_H(E)
=
\min_{A\subseteq\omega}
\sup_{x\in E}
\liminf_{r\to\infty}\frac{K^A(x;r)}{r},
\qquad
\dim_P(E)
=
\min_{A\subseteq\omega}
\sup_{x\in E}
\limsup_{r\to\infty}\frac{K^A(x;r)}{r}.
\]
\end{theorem}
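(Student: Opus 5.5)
The proof has two directions for each of the two formulas, and both reduce to Theorems~\ref{thm:prefix-free-ball-code-hausdorff} and~\ref{thm:packing-ball-code-dimension}. For the lower bound, fix an oracle \(A\). The partial map \(p\mapsto U^A(p)\) is a prefix-free ball code, because its domain is prefix-free and its values are rational closed balls of positive radius. Call this code \(\varphi_A\). Comparing Definition~\ref{def:effective-ball-complexity} with Definition~\ref{def:prefix-free-ball-code-dimension} shows that \(K^A(x;r)=K_{\varphi_A}(x;r)\) for all \(x\) and \(r\). Lemma~\ref{lem:code-to-cover} then gives
\[
\dim_H(E)\le\dim_{\varphi_A}(E)=\sup_{x\in E}\liminf_{r\to\infty}\frac{K^A(x;r)}{r},
\]
and Lemma~\ref{lem:packing-code-to-box} gives the analogous bound for \(\dim_P(E)\) with \(\limsup\). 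Since \(A\) was arbitrary, each dimension is at most the infimum over oracles.

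For the upper bound and attainment, treat the Hausdorff case first. Lemma~\ref{lem:cover-to-code} supplies a prefix-free ball code \(\varphi^*\) with \(\dim_{\varphi^*}(x)\le\dim_H(E)\) for every \(x\in E\). Lemma~\ref{lem:rationalize-prefix-free-ball-code} converts it into a rational code \(\psi\) satisfying \(K_\psi(x;r)\le K_{\varphi^*}(x;r+1)\). Lemma~\ref{lem:oracle-simulates-rational-ball-code} then yields an oracle \(A\) with \(K^A(x;r)\le K_{\varphi^*}(x;r+1)+c\), where \(c\) does not depend on \(x\) or \(r\). Dividing by \(r\) and using \((r+1)/r\to1\) with \(c/r\to0\),
\[
\liminf_{r\to\infty}\frac{K^A(x;r)}{r}\le\liminf_{r\to\infty}\frac{K_{\varphi^*}(x;r+1)}{r+1}=\dim_{\varphi^*}(x)\le\dim_H(E).
\]
Taking the supremum over \(x\in E\) shows that this \(A\) attains the value \(\dim_H(E)\), so the infimum over oracles is a minimum.

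The packing case is identical. Use Lemma~\ref{lem:packing-box-to-code} in place of Lemma~\ref{lem:cover-to-code}, and replace \(\liminf\) by \(\limsup\) throughout. The shift and the constant vanish in the \(\limsup\) for the same reasons. The oracles chosen for the two formulas may differ, which is allowed because the two minima are separate.

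No step is a real obstacle here; the work is already in the earlier lemmas. The point that needs care is the precision shift. The inequality \(K^A(x;r)\le K_{\varphi^*}(x;r+1)+c\) must hold for every \(r\), which Lemmas~\ref{lem:rationalize-prefix-free-ball-code} and~\ref{lem:oracle-simulates-rational-ball-code} guarantee uniformly. The rescaling must then be carried out inside \(\liminf\) and \(\limsup\) rather than along particular sequences of precisions. A secondary check is that \(U^A\) really has values of positive radius and a prefix-free domain for every \(A\), so that it is a prefix-free ball code; this follows from the paper's standing convention that \(U\) outputs rational closed balls.
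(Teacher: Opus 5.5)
Your proposal is correct and follows essentially the same route as the paper. For the lower bound you treat \(U^A\) as a prefix-free ball code and apply the classical lower bounds; for the upper bound you rationalize a minimizing code (Lemma~\ref{lem:rationalize-prefix-free-ball-code}) and encode it into an oracle (Lemma~\ref{lem:oracle-simulates-rational-ball-code}), the only cosmetic difference being that you carry the one-step precision shift pointwise instead of first recording \(\dim_\psi(E)=\dim_H(E)\) as the paper does.
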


\begin{proof}
Put \(d=\dim_H(E)\).  For every oracle \(A\), the machine \(U^A\) is a
prefix-free ball code, so the classical coding characterization gives
\[
d\le \dim_{U^A}(E)
=
\sup_{x\in E}\liminf_{r\to\infty}\frac{K^A(x;r)}{r}.
\]
Hence
\[
d\le
\inf_{A\subseteq\omega}
\sup_{x\in E}\liminf_{r\to\infty}\frac{K^A(x;r)}{r}.
\]

Conversely, choose a prefix-free ball code \(\varphi\) with
\(\dim_\varphi(E)=d\).  By
Lemma~\ref{lem:rationalize-prefix-free-ball-code}, there is a prefix-free
rational closed ball code \(\psi\) with
\(\dim_\psi(E)\le\dim_\varphi(E)=d\).  On the other hand,
Theorem~\ref{thm:prefix-free-ball-code-hausdorff} gives
\(d\le\dim_\psi(E)\).  Hence \(\dim_\psi(E)=d\).  If \(A\) encodes \(\psi\) as in
Lemma~\ref{lem:oracle-simulates-rational-ball-code}, then
\[
K^A(x;r)\le K_\psi(x;r)+O(1)
\]
for all \(x\) and \(r\).  Dividing by \(r\), taking lower limits, and then
taking the supremum over \(x\in E\) gives
\[
\sup_{x\in E}\liminf_{r\to\infty}\frac{K^A(x;r)}{r}
\le \dim_\psi(E)=d.
\]
Thus equality is attained by this oracle \(A\), and the infimum is a minimum.

The packing equality follows by the same argument, with \(\liminf\), \(\dim\), and
Theorem~\ref{thm:prefix-free-ball-code-hausdorff} replaced by \(\limsup\),
\(\Dim\), and Theorem~\ref{thm:packing-ball-code-dimension}, respectively.
\end{proof}

Thus the point-to-set principles are obtained by effectivizing the classical
ball-code characterizations: in each formula, a minimizing oracle need only
encode a rational closed ball code that is optimal for the set.

\subsection{Point-to-set principles via rational point complexity}
\label{subsec:rational-point-complexity}

We finally compare this coding formulation with the usual Kolmogorov complexity formulation.
In the preliminaries, equations~\eqref{eq:euclidean-point-complexity} and
\eqref{eq:euclidean-effective-dimensions} recalled the nonrelativized
Euclidean point-complexity characterization.
The following definition is the
corresponding oracle-relative version.

\begin{definition}
\label{def:rational-point-dimensions}
Fix an optimal oracle prefix-free Turing machine \(V\) whose outputs are rational
points in \(\mathbb Q^n\).  For an oracle \(A\subseteq\omega\) and
\(q\in\mathbb Q^n\), define
\[
K^A(q)
=
\min\{|p|:V^A(p)=q\}.
\]
For \(x\in\mathbb R^n\) and \(r\in\omega\), define
\[
K^A_r(x)
=
\min\{K^A(q):q\in\mathbb Q^n,\ |x-q|<2^{-r}\}.
\]
Define the effective Hausdorff and packing dimensions of \(x\) relative to
\(A\) by
\[
\dim_e^A(x)
=
\liminf_{r\to\infty}\frac{K^A_r(x)}{r}
\quad\text{and}\quad
\Dim_e^A(x)
=
\limsup_{r\to\infty}\frac{K^A_r(x)}{r}.
\]
For nonempty \(E\subseteq\mathbb R^n\), put
\[
\dim_e^A(E)
=
\sup_{x\in E}\dim_e^A(x)
\quad\text{and}\quad
\Dim_e^A(E)
=
\sup_{x\in E}\Dim_e^A(x).
\]
\end{definition}

Fix also an optimal prefix-free Turing machine whose outputs are natural numbers,
and let \(K_\omega(r)\) be the prefix-free complexity of \(r\in\omega\) with
respect to this machine.  Then \(K_\omega(r)=O(\log(r+2))\).

\begin{proposition}
\label{prop:ball-point-effective-dimensions}
For every oracle \(A\subseteq\omega\), every \(x\in\mathbb R^n\), and every
\(r\in\omega\),
\[
K^A_r(x)\le K^A(x;r)+O(1)
\]
and
\[
K^A(x;r)\le K^A_{r+1}(x)+K_\omega(r)+O(1).
\]
The constants implicit in \(O(1)\) are independent of \(A\), \(x\), and \(r\).
\end{proposition}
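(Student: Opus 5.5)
Both inequalities will be proved by building oracle prefix-free machines that simulate $U^A$ or $V^A$ uniformly in the oracle. Optimality of $V$ and $U$ then converts each simulation into an additive constant that is independent of $A$, $x$ and $r$.

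\emph{First inequality.} Define an oracle machine $M$ with point outputs as follows: on input $p$, run $U^A(p)$, and if it halts with a rational ball $\overline B(q,\rho)$, output the center $q$. This $M$ is prefix-free for every oracle, since its domain equals $\dom(U^A)$. Now let $p$ witness $K^A(x;r)$. Then $x\in\overline B(q,\rho)$ and $2\rho=\diam\le 2^{-r}$, so $|x-q|\le\rho\le 2^{-r-1}<2^{-r}$. Optimality of $V$ gives $K^A(q)\le |p|+c_M$, and hence $K^A_r(x)\le K^A(x;r)+c_M$. The strictness of the inequality in the definition of $K^A_r$ is handled by the factor $2$ between diameter and radius.

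\emph{Second inequality.} Let $q$ be a rational point with $|x-q|<2^{-r-1}$ and $K^A(q)=K^A_{r+1}(x)$, witnessed by a program $p$ for $V^A$. Let $w$ be a shortest program for $r$ on the fixed machine for $\omega$, so $|w|=K_\omega(r)$. Define an oracle machine $N$ that, on input $z$, does the following.
\begin{itemize}
\item[(i)] Search for a prefix $w$ of $z$ in the domain of the $\omega$-machine. This domain is prefix-free, so the split is well defined once a halting computation is found; dovetailing over all prefixes keeps the resulting domain prefix-free.
\item[(ii)] Search for the remaining suffix $p$ in $\dom(V^A)$, obtaining $q=V^A(p)$.
\item[(iii)] Output the ball $\overline B(q,2^{-r-1})$.
\end{itemize}
The set of concatenations $wp$ with $w$ and $p$ each drawn from a prefix-free set is itself prefix-free, so $N$ is a prefix-free oracle machine with rational-ball outputs. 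The output ball contains $x$ and has diameter exactly $2^{-r}$. Optimality of $U$ therefore gives
\[
K^A(x;r)\le |w|+|p|+c_N=K_\omega(r)+K^A_{r+1}(x)+c_N.
\]

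\emph{Main obstacle.} The only delicate point is making sure that $N$ is genuinely prefix-free for \emph{every} oracle and that it does not depend on $x$ or $r$. The prefix-free concatenation argument in (i)--(ii) handles this. The rest is routine bookkeeping of radii against diameters: the choice of precision $r+1$ on the right-hand side is exactly what allows a ball of radius $2^{-r-1}$, and hence diameter $2^{-r}$, to contain $x$.
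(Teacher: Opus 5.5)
Your proposal is correct and follows the same route as the paper. For the first inequality you output the center of the $U^A$-ball, using $|x-q|\le\rho\le 2^{-(r+1)}<2^{-r}$. For the second you concatenate a shortest self-delimiting description of $r$ with a $V^A$-program for $q$ and output $\overline B(q,2^{-(r+1)})$, with the oracle-uniform optimality of $V$ and $U$ giving constants independent of $A$, $x$, $r$. You spell out the machine constructions and the prefix-freeness of the concatenated domain, which the paper leaves implicit.
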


\begin{proof}
If \(U^A(p)=\overline B(q,\rho)\) witnesses \(K^A(x;r)\), then
\(|x-q|\le\rho\le 2^{-(r+1)}\), so a program that outputs the center gives the
first inequality.  Conversely, if \(V^A(p)=q\) and
\(|x-q|<2^{-(r+1)}\), then \(p\), together with a shortest description of
\(r\), specifies the ball \(\overline B(q,2^{-(r+1)})\), which contains \(x\)
and has diameter \(2^{-r}\).  The second inequality follows.  In both cases the
additive constant follows from optimality and is uniform in \(A\), \(x\), and
\(r\).
\end{proof}

\begin{corollary}
\label{cor:ball-point-effective-dimensions}
For every oracle \(A\subseteq\omega\) and every \(x\in\mathbb R^n\),
\[
\dim_e^A(x)
=
\liminf_{r\to\infty}\frac{K^A(x;r)}{r},
\qquad
\Dim_e^A(x)
=
\limsup_{r\to\infty}\frac{K^A(x;r)}{r}.
\]
\end{corollary}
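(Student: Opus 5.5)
We derive both equalities directly from Proposition~\ref{prop:ball-point-effective-dimensions} by dividing by \(r\) and passing to lower and upper limits. Fix \(A\) and \(x\).

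The first inequality, \(K^A_r(x)\le K^A(x;r)+O(1)\), handles one direction. Dividing by \(r\) and letting \(r\to\infty\) removes the additive constant. Taking \(\liminf\) gives \(\dim_e^A(x)\le\liminf_r K^A(x;r)/r\), and taking \(\limsup\) gives \(\Dim_e^A(x)\le\limsup_r K^A(x;r)/r\). Both comparisons are pointwise in \(r\), so they pass to either limit without further work.

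The second inequality handles the reverse direction. It reads
\[
K^A(x;r)\le K^A_{r+1}(x)+K_\omega(r)+O(1),
\]
and we use \(K_\omega(r)=O(\log(r+2))\), so \((K_\omega(r)+O(1))/r\to0\). Writing
\[
\frac{K^A_{r+1}(x)}{r}=\frac{K^A_{r+1}(x)}{r+1}\cdot\frac{r+1}{r}
\]
and using \((r+1)/r\to1\), we get
\[
\liminf_{r\to\infty}\frac{K^A(x;r)}{r}\le\liminf_{r\to\infty}\frac{K^A_{r+1}(x)}{r+1}=\dim_e^A(x),
\]
and the same with \(\limsup\) and \(\Dim_e^A(x)\). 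The index shift \(r\mapsto r+1\) does not change a lower or upper limit of a sequence.

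The only point needing care is the case where the rates are infinite or the quantities are large. Each \(K^A_r(x)\) is finite, since some rational point lies within \(2^{-r}\) of \(x\) and \(V\) is optimal. It is also bounded by \(nr+O(\log r)\) via dyadic rational approximants, so all the limits are finite. Even without that bound, a product of a nonnegative sequence with a factor tending to \(1\) has the same \(\liminf\) and \(\limsup\), including the value \(+\infty\). I expect no real obstacle; the main step is the bookkeeping of the precision shift and the sublinear term \(K_\omega(r)\). Combining the two directions gives both stated equalities.
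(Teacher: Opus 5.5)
Your proposal is correct and follows the paper's proof: divide the two inequalities of Proposition~\ref{prop:ball-point-effective-dimensions} by \(r\), then pass to lower and upper limits using \((r+1)/r\to1\) and \(K_\omega(r)/r\to0\). Your extra remarks on the index shift and on finiteness or infinite values are sound bookkeeping that the paper leaves implicit.
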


\begin{proof}
This follows by dividing the two inequalities in
Proposition~\ref{prop:ball-point-effective-dimensions} by \(r\) and taking lower
and upper limits, since \((r+1)/r\to1\) and \(K_\omega(r)/r\to0\).
\end{proof}

\begin{corollary}[Point-to-set principle]
\label{cor:rational-point-point-to-set}
For every nonempty \(E\subseteq\mathbb R^n\),
\[
\dim_H(E)=\min_{A\subseteq\omega}\sup_{x\in E}\liminf_{r\to\infty}\frac{K^A_r(x)}{r},
\qquad
\dim_P(E)=\min_{A\subseteq\omega}\sup_{x\in E}\limsup_{r\to\infty}\frac{K^A_r(x)}{r}.
\]
\end{corollary}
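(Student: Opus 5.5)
The plan is to combine Theorem~\ref{thm:point-to-set-coding} with Corollary~\ref{cor:ball-point-effective-dimensions}. Corollary~\ref{cor:ball-point-effective-dimensions} holds for every oracle \(A\) and every point \(x\), and says
\[
\liminf_{r\to\infty}\frac{K^A_r(x)}{r}=\liminf_{r\to\infty}\frac{K^A(x;r)}{r},
\qquad
\limsup_{r\to\infty}\frac{K^A_r(x)}{r}=\limsup_{r\to\infty}\frac{K^A(x;r)}{r}.
\]
Taking the supremum over \(x\in E\) on both sides therefore gives, for each fixed \(A\),
\[
\dim_e^A(E)=\sup_{x\in E}\liminf_{r\to\infty}\frac{K^A(x;r)}{r},
\qquad
\Dim_e^A(E)=\sup_{x\in E}\limsup_{r\to\infty}\frac{K^A(x;r)}{r}.
\]

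The two functions of \(A\) being minimized in Corollary~\ref{cor:rational-point-point-to-set} and in Theorem~\ref{thm:point-to-set-coding} are thus identical, pointwise in \(A\). Their infima over \(A\subseteq\omega\) therefore coincide, and Theorem~\ref{thm:point-to-set-coding} identifies these infima with \(\dim_H(E)\) and \(\dim_P(E)\) respectively.

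For attainment, I would take the oracle \(A\) produced in the proof of Theorem~\ref{thm:point-to-set-coding}, namely the one encoding a minimizing rational ball code via Lemma~\ref{lem:oracle-simulates-rational-ball-code}. By the pointwise identity above, this \(A\) also attains the minimum in the rational-point formulation. The Hausdorff and packing halves are handled identically, using \(\liminf\) and \(\limsup\) respectively.

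There is essentially no obstacle here; the only point to check is that the corollary's equality is exact for each \(A\) and \(x\), not merely up to an error that depends on \(A\). This holds because the constants in Proposition~\ref{prop:ball-point-effective-dimensions} are uniform, and in any case they disappear after division by \(r\).
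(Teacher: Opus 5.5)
Your proposal is correct and follows essentially the same route as the paper, which derives the corollary immediately from Theorem~\ref{thm:point-to-set-coding} and Corollary~\ref{cor:ball-point-effective-dimensions}. You make explicit the pointwise-in-\(A\) identity of the two objective functions, which gives both the equality of the infima and attainment by the same oracle.
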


\begin{proof}
This follows immediately from the coding form of the point-to-set principle and
Corollary~\ref{cor:ball-point-effective-dimensions}.
\end{proof}

Thus Corollary~\ref{cor:rational-point-point-to-set} recovers
Theorem~\ref{thm:lutz-lutz-point-to-set}.

\section*{Statements and Declarations}

\subsection*{Funding}

This work was supported by JSPS KAKENHI Grant Numbers
JP22K03408 
and
JP25K07105. 

\subsection*{Competing Interests}
The author has no relevant financial or non-financial interests to disclose.

\subsection*{Data Availability}
No datasets were generated or analysed during the current study.

\section*{Acknowledgments}

This work was supported by the Research Institute for Mathematical Sciences,
an International Joint Usage/Research Center located in Kyoto University.


\end{document}